\newtheorem{theorem}{Theorem}[section]
\newtheorem{lemma}[theorem]{Lemma}
\newtheorem{proposition}[theorem]{Proposition}
\newtheorem{corollary}[theorem]{Corollary}
\newtheorem{cit}[theorem]{Citation}
\newtheorem{observation}[theorem]{Observation}
\theoremstyle{definition}
\newtheorem{definition}[theorem]{Definition}
\newtheorem{remark}[theorem]{Remark}
\newtheorem{example}[theorem]{Example}
\newtheorem*{main:contractible}{Theorem~\ref{thrm:contractible}}
\newtheorem*{main:asymp_cat0_group}{Theorem~\ref{thrm:asymp_cat0_group}}
\newtheorem*{main:F_infty}{Theorem~\ref{thrm:F_infty}}
\newcommand{\Z}{\mathbb{Z}}
\newcommand{\N}{\mathbb{N}}
\newcommand{\R}{\mathbb{R}}
\newcommand{\lk}{\operatorname{lk}}
\newcommand{\st}{\operatorname{st}}
\newcommand{\dst}{\st^\downarrow\!}
\newcommand{\dlk}{\lk^\downarrow\!}
\newcommand{\dflk}{\lk^\downarrow_\partial}
\newcommand{\dclk}{\lk^\downarrow_\delta}
\newcommand{\rips}{\mathcal{VR}}
\newcommand{\defeq}{\mathbin{\vcentcolon =}}
\DeclareMathOperator{\F}{F}
\DeclareMathOperator{\CAT}{CAT}
\DeclareMathOperator{\diam}{\texttt{diam}}
\renewcommand{\dim}{\texttt{dim}}
\numberwithin{equation}{section}
\begin{document}

\title{Bestvina--Brady discrete Morse theory and Vietoris--Rips complexes}
\date{\today}
\subjclass[2010]{Primary 55D15;   
                 Secondary 57M07, 
								 20F65}            

\keywords{Discrete Morse theory, Vietoris--Rips complex, topological data analysis, CAT(0) space, finiteness properties, group combing}

\author{Matthew C.~B.~Zaremsky}
\address{Department of Mathematics and Statistics, University at Albany (SUNY), Albany, NY 12222}
\email{mzaremsky@albany.edu}

\begin{abstract}
We inspect Vietoris--Rips complexes $\rips_t(X)$ of certain metric spaces $X$ using a new generalization of Bestvina--Brady discrete Morse theory. Our main result is a pair of metric criteria on $X$, called the \emph{Morse Criterion} and \emph{Link Criterion}, that allow us to deduce information about the homotopy types of certain $\rips_t(X)$. One application is to topological data analysis, specifically persistence of homotopy type for certain Vietoris--Rips complexes. For example we recover some results of Adamaszek--Adams and Hausmann regarding homotopy types of $\rips_t(S^n)$. Another application is to geometric group theory; we prove that any group acting geometrically on a metric space satisfying a version of the Link Criterion admits a geometric action on a contractible simplicial complex, which has implications for the finiteness properties of the group. This applies for example to asymptotically $\CAT(0)$ groups. We also prove that any group with a word metric satisfying the Link Criterion in an appropriate range has a contractible Vietoris--Rips complex, and use combings to exhibit a family of groups with this property.
\end{abstract}

\maketitle
\thispagestyle{empty}

\section*{Introduction}

The Vietoris--Rips complex $\rips_t(X)$ of a metric space $X$ is the simplicial complex whose vertex set is $X$ and whose simplices are given by collections of elements of $X$ that are pairwise not more than distance $t$ apart. This is a natural way to extract a nice simplicial complex from a possibly strange metric space. Varying $t$ gives us a filtration $\{\rips_t(X)\}_{t\in\R}$ of the ``simplex on $X$'' $\rips_\infty(X)$, which is contractible (we assume all metric spaces here are non-empty). The topological properties of the $\rips_t(X)$ are generally quite difficult to analyze. For example, for the $n$-sphere $S^n$ a complete picture of the homotopy types of $\rips_t(S^n)$ for all $t$ is currently known only in the $n=1$ case \cite{adamaszek17circle}.

For bounded $X$ the $\rips_t(X)$ are contractible for large enough $t$, but for unbounded $X$ this need not be true. One can ask then what sorts of conditions on $X$ ensure that $\rips_t(X)$ is contractible for large enough $t$, or more generally what sorts of conditions allow us to compute homotopy types of any of the $\rips_t(X)$. In this paper we approach this problem using discrete Morse theory, or more precisely a new generalization of Bestvina--Brady discrete Morse theory. Discrete Morse theory is a powerful tool that leverages ``local'' topological information to make ``global'' conclusions about, broadly speaking, subcomplexes of CW complexes. Our new definition of ``discrete Morse function'' in Definition~\ref{def:morse} simultaneously generalizes a number of previous notions of discrete Morse function, e.g, those in \cite{brown92,bestvina97,forman98,zaremsky17sepPB,witzel18}.

Our main results hinge on a pair of criteria that a metric space $X$ could satisfy, which we call the \emph{Morse Criterion} and \emph{Link Criterion}. Intuitively, the Morse Criterion asks that $X$ be somewhat uniformly discrete, and the Link Criterion asks that intersections of metric balls have controllable diameter. A standard example of $X$ satisfying both criteria is $\Z^n$ with the euclidean metric induced from $\R^n$ (see Example~\ref{ex:lattice}). The main result of the paper is:

\begin{main:contractible}
Let $X$ be a metric space satisfying the Morse Criterion, and the Link Criterion in the range $I$. Then for any $(t,s]\subseteq I$, the inclusion $\rips_t(X)\to\rips_s(X)$ is a homotopy equivalence, and for any $(t,\infty)\subseteq I$, $\rips_t(X)$ is contractible.
\end{main:contractible}

The discrete Morse function $(\diam,-\dim)$ that leads to Theorem~\ref{thrm:contractible} also has applications to topological data analysis. An example of an application in this world is a shorter, ``local'' proof of Hausmann's theorem on Vietoris--Rips complexes of spheres \cite{hausmann95}. Namely, we prove that if $S^n$ has the arclength metric, scaled so antipodal points have distance $1/2$, then for any $0<t<1/4$ we have $\rips_t(S^n)\simeq S^n$ (Proposition~\ref{prop:spheres}). For $n=1$ we get the improved range $0<t<1/3$, recovering part of a result of Adamaszek--Adams \cite{adamaszek17circle}. For $n=1$ the homotopy type is actually known for all $t$ \cite{adamaszek17circle}, and it would be interesting to try and use this Morse theoretic approach to finish the picture for all $n$ and all $t$.

We also discuss applications in the world of geometric group theory, and topological finiteness properties of groups. Recall that a group is of \emph{type $F_n$} if it admits a geometric (that is, proper and cocompact) action on an $(n-1)$-connected CW complex (a more standard definition of type $\F_n$ asks that the action be free, but in fact properness suffices, for example thanks to Brown's Criterion \cite{brown87}). A group is of \emph{type $F_\infty$} if it is of type $\F_n$ for all $n$. We will also say a group is of \emph{type $F_*$} if it admits a geometric action on a contractible CW complex, which is stronger than type $\F_\infty$, for example Thompson's group $F$ is of type $\F_\infty$ \cite{brown84}, but not $\F_*$ since it contains $\Z^\infty$. (The name ``type $\F_*$'' is not widely used, but it was recently coined by Craig Guilbault, and will be useful here to state some upcoming results.) Finally, we mention that a group is of \emph{type $F$} if it acts freely and cocompactly on a contractible CW complex, which is equivalent to being torsion-free and of type $\F_*$.

One result, Proposition~\ref{prop:geom_action}, says that a group acting geometrically on a proper metric space satisfying the so called Arbitrarily Pinched Strong Link Criterion admits a geometric action on a contractible simplicial complex and hence is of type $\F_*$. This has the following implication for asymptotically $\CAT(0)$ groups (as in \cite{kar11}):

\begin{main:asymp_cat0_group}
Let $G$ be an asymptotically $\CAT(0)$ group. Then $G$ admits a geometric action on a contractible simplicial complex, and hence is of type $\F_*$.
\end{main:asymp_cat0_group}

This generalizes some previously known results, e.g., that hyperbolic and $\CAT(0)$ groups are of type $\F_*$ \cite{ontaneda05,bridson99}, and that asymptotically $\CAT(0)$ groups are of type $\F_\infty$ \cite{kar11}.

We also get the following result involving word metrics:

\begin{main:F_infty}
Let $G$ be a group with a word metric corresponding to some finite generating set. If $G$ satisfies the Link Criterion in the range $[N,\infty)$ for some $N$ then $\rips_t(G)$ is contractible for all $t\ge N$, and hence $G$ is of type $\F_*$.
\end{main:F_infty}

We discuss some examples of groups satisfying Theorem~\ref{thrm:F_infty}, in particular examples involving combings of groups. As a special case, this recovers Rips' result on hyperbolic groups admitting contractible Vietoris--Rips complexes.

\medskip

Since different parts of this paper might be of interest to readers with quite different backgrounds, let us point out what might be interesting to whom. Topological data analysts may be most interested in the reconciliation of Forman's discrete Morse theory for finite simplicial complexes as a special case of Bestvina--Brady discrete Morse theory, in Section~\ref{sec:morse} and Remark~\ref{rmk:forman_to_bb}, and also in the applications, e.g., to persistence of homotopy type, in Section~\ref{sec:tda}. Metric geometers might be interested in the Morse Criterion and Link Criterion in Sections~\ref{sec:rips} and~\ref{sec:links} in and of themselves, and the implications for general Vietoris--Rips complexes. Geometric group theorists will probably be most interested in the examples in Section~\ref{sec:link_examples} involving generalizations of $\CAT(0)$ spaces, and the applications to finiteness properties of groups in Section~\ref{sec:groups}, and perhaps also the new generalization of ``discrete Morse function'' in Definition~\ref{def:morse}.

\subsection*{Acknowledgments} The idea for this project stemmed from a conversation with Henry Adams at the Upstate New York Topology Seminar, and I am grateful to Henry for enthusiastically wondering what Bestvina--Brady Morse theory could do in the world of Vietoris--Rips complexes. I also want to thank him for a number of excellent questions and comments on earlier drafts, including asking a question that led to Corollary~\ref{cor:approx}. I am also grateful to Craig Guilbault for explaining some terminological background, Susan Hermiller for helping me track down a reference, and Matt Brin and Vidit Nanda for helpful suggestions. I also thank the anonymous referees for many helpful comments, and especially for catching a mistake in an earlier version of Observation~\ref{obs:discrete}.

\section{Discrete Morse theory}\label{sec:morse}

In this section we discuss Bestvina--Brady discrete Morse theory (also called PL Morse theory), in a generality applicable to the sorts of problems we consider here. Bestvina--Brady discrete Morse theory was developed by Bestvina and Brady in \cite{bestvina97}, and has proven to be an indispensable tool in the study of topological aspects of discrete groups. A nice introduction to the theory is given in \cite{bestvina08}. We should mention that in the literature the term ``discrete Morse theory'' often means Forman's discrete Morse theory \cite{forman98}; see Remark~\ref{rmk:forman_to_bb} for a discussion of the relationship.

The basic purpose of discrete Morse theory is to understand topological properties of subcomplexes of a complex. Given a CW complex $X$ and a well order on the cells, one can consider all the \emph{sublevel complexes}, that is the subcomplexes consisting of just those cells less than (or less than or equal to) a given cell in the order. Thanks to well ordering, at every stage there is a well defined ``next'' cell to glue in, so transfinite induction lets us make conclusions about the sublevel complexes. If one understands the relative links along which all cells are attached, one can in theory deduce the homotopy type of any sublevel complex.

In practice this is usually impossibly complicated, but discrete Morse theory provides a way to implicitly choose a well order, using a globally defined function called a Morse function, so that every relative link of a cell equals the so called ``descending link'' of that cell with respect to the Morse function. In concrete applications the descending links are often quite nice, and so Morse theory turns the difficult global problem of understanding sublevel complexes into an easier local problem about descending links. The key ideas behind Bestvina--Brady-style discrete Morse theory include that the cells of the complex have an affine structure, and the Morse function is affine and non-constant when restricted to each positive dimensional cell. This ensures that the descending links are determined by their $0$-skeleta and that one only needs to understand descending links of vertices, making the whole problem more tractable. In practice one often hopes for a large number of vertices to have a contractible descending link, since attaching such a vertex does not change the homotopy type.

The original formulation of Bestvina--Brady discrete Morse theory requires a Morse function that is non-constant on edges, affine on cells, and whose image of the vertex set is closed and discrete in $\R$. To handle some recent applications involving Bieri--Neumann--Strebel--Renz invariants of groups, Stefan Witzel and the author developed ways to relax these conditions \cite{witzel18,zaremsky17,zaremsky17sepPB}. For our present purposes we need to relax the conditions even further. The definition of Morse function that will be most useful here is as follows. As we will see after the definition, special cases include both Bestvina--Brady discrete Morse theory and Forman's discrete Morse theory.

\begin{definition}[Morse function]\label{def:morse}
Let $Y$ be an affine cell complex and let $(h,f) \colon Y \to \R\times \R$ be a map such that the restrictions of $h$ and $f$ to any cell are affine functions. Assume for each edge $\{v,w\}$ that $(h,f)(v)\ne (h,f)(w)$. Let $<$ be the lexicographic order on $\R\times\R$. By a \emph{ray} we mean a sequence $v_1,v_2,\dots$ of vertices of $Y$ such that each $\{v_i,v_{i+1}\}$ is an edge in $Y$. Call a ray \emph{descending} if $(h,f)(v_i)>(h,f)(v_{i+1})$ for all $i$, and \emph{ascending} if $(h,f)(v_i)<(h,f)(v_{i+1})$ for all $i$. We call $(h,f)$ a \emph{descending-type Morse function} if for every descending ray $v_1,v_2,\dots$ the $h$ values $h(v_1),h(v_2),\dots$ have no lower bound in $\R$, and alternately an \emph{ascending-type Morse function} if along every ascending ray the $h$ values have no upper bound.
\end{definition}

Note that if there are no descending (respectively ascending) rays to begin with, then this last condition holds vacuously.

If $Y$ is a simplicial complex then any function $Y^{(0)}\to\R$ can be extended to a map $Y\to\R$ by extending affinely to positive dimensional simplices. With this in mind, in the simplicial case we will frequently conflate a Morse function with its restriction to the vertex set, implicitly understanding that the latter is being affinely extended to the positive dimensional simplices.

\begin{example}\label{ex:wz}
Definition~\ref{def:morse} is a generalization of Definition~2.1 in \cite{zaremsky17sepPB} (which was itself a generalization of the notion of Morse function in \cite{witzel18}). There the rules for a descending-type Morse function were that the $f$ values be well ordered in $\R$, and that there exist $\varepsilon>0$ such that for all adjacent vertices $v$ and $w$ either $|h(v)-h(w)|\ge\varepsilon$, or else $h(v)=h(w)$ and $f(v)\ne f(w)$. These conditions clearly imply our current ones.
\end{example}

\begin{example}[Bestvina--Brady]\label{ex:bb}
Bestvina and Brady's original definition of Morse function in \cite{bestvina97} is the special case of Definition~\ref{def:morse} where $h(Y^{(0)})\subseteq \R$ is closed and discrete, $h$ takes distinct values on adjacent vertices, and $f$ is constant. (As a remark, the condition that $h(Y^{(0)})\subseteq \R$ is closed was not stated in \cite{bestvina97}, just that it is discrete, but closedness is necessary for their results to hold.)
\end{example}

A situation that is often relevant (and will be for us) is when $Y$ is the barycentric subdivision of some complex $Z$, so the vertices of $Y$ represent the cells of $Z$. In this case one can use the measurement ``dimension'' as the function $f$ (or ``negative dimension'' depending on whether one wants to be in the descending or ascending scenario) and for $h$ some function defined on the set of simplices of $Z$.

\begin{example}[Dimension]\label{ex:dim}
The most simplistic example of the above is to just use dimension and nothing else. Let $Z'$ be the barycentric subdivision of $Z$, let $h\colon Z'\to\R$ be some constant function, and let $f=\dim\colon Z'\to \R$ send each vertex in $Z'$ to its dimension as a cell of $Z$. Then $(h,\dim)$ is a descending-type Morse function, but it is not very useful, for reasons we will point out later (to foreshadow, none of the descending links are contractible).
\end{example}

\begin{example}[Forman]\label{ex:forman}
Forman's discrete Morse theory for finite simplicial complexes, introduced in \cite{forman98}, is also a special case. Let us use the setup from Forman's ``user's guide'' \cite{forman02}. Let $Z$ be a finite simplicial complex and $K$ its set of simplices (so $K$ is the vertex set of the barycentric subdivision $Z'$). If $\sigma\in K$ is $k$-dimensional we may write $\sigma^{(k)}$. A ``Forman style'' discrete Morse function is a map $h\colon K\to\R$, such that for each $\sigma^{(k)}\in K$ we have
\[
|\{\widetilde{\sigma}^{(k+1)}>\sigma^{(k)}\mid h(\widetilde{\sigma})\le h(\sigma)\}|\le 1 \text{ and } |\{(\sigma')^{(k-1)}<\sigma^{(k)}\mid h(\sigma')\ge h(\sigma)\}|\le 1\text{.}
\]
It turns out no $\sigma$ can have both of these cardinalities equal to $1$, so the simplices are partitioned into \emph{redundant simplices} (those where the first set above is non-empty), \emph{collapsible simplices} (those where the second set above is non-empty), and \emph{critical simplices} (those where both sets are empty). To recast this in our current framework, view $h$ as a function from the vertex set of the barycentric subdivision $Z'$ of $Z$ to $\R$. Now $(h,-\dim)$ is a descending-type Morse function in the sense of Definition~\ref{def:morse} (since there are no descending rays). It also encodes the exact same information as $h$ regarding relative heights of adjacent simplices. (Incidentally, if $h$ is injective as a function $K\to\R$ then there is not even any need to use the $-\dim$ factor.)
\end{example}

\begin{remark}[Forman to Bestvina--Brady]\label{rmk:forman_to_bb}
For $Z$ a finite simplicial complex, the function $(h,-\dim)$ from Example~\ref{ex:forman} can be converted into a discrete Morse function as in Bestvina--Brady's original definition. Indeed, the outputs of $(h,-\dim)$ on vertices of $Z'$ form a finite totally ordered set, which can be embedded in an order-preserving way into $\R$, so one can view $(h,-\dim)$ as a function to $\R$ that satisfies all the requirements to be a Bestvina--Brady-style discrete Morse function on $Z'$. In other words, any time Forman's discrete Morse theory is being applied to a finite simplicial complex, one could equivalently apply Bestvina--Brady discrete Morse theory. On the other hand, $(h,-\dim)$ is always a Bestvina--Brady-style discrete Morse function for any $h$, even if $h$ is not a Forman-style discrete Morse function.
\end{remark}

\begin{example}[Brown/Brown--Geoghegan]\label{ex:brown}
Related to Forman's discrete Morse theory (and actually predating it) is Brown's notion of a ``collapsing scheme'' \cite{brown92}, which was first used by Brown and Geoghegan in \cite{brown84} (phrased very differently) to prove that Thompson's group $F$ is of type $\F_\infty$. The idea is to ``match'' collapsible and redundant simplices in a simplicial complex $Z$, just like in Forman's discrete Morse theory, with the difference being that $Z$ might be infinite. The additional restriction needed to account for the infinite case, called (C2) in \cite{brown92}, is that for any sequence of redundant simplices $\sigma_1,\sigma_2,\dots$, such that for each $i$, $\sigma_{i+1}$ is a face of the collapsible simplex matched with $\sigma_i$, the sequence must be finite. Using $(h,-\dim)$ as in Example~\ref{ex:forman}, $(h,-\dim)$ is a descending-type Morse function in the sense of Definition~\ref{def:morse} if and only if there are no descending rays, and this is easily seen to be the same as Brown's (C2) condition.
\end{example}

Before stating and proving the Morse Lemma for our current definition of Morse function, we need to discuss ascending and descending links. Given a Morse function $(h,f) \colon Y \to \R\times \R$, since $h$ and $f$ are affine on cells and $(h,f)$ is non-constant on edges, $(h,f)$ restricted to a cell $c$ achieves its maximum and minimum values at unique vertices of $c$. Now define the \emph{descending star} $\dst(v)$ of a vertex $v$ to be the subcomplex of the star $\st_Y(v)$ consisting of those cells on which $(h,f)$ achieves its maximum at $v$, and the \emph{descending link} $\dlk(v)$ of $v$ to be the link of $v$ in $\dst(v)$. The \emph{ascending star} and \emph{ascending link} are defined analogously, but in this paper we will always use the descending setup. Given $t\in\R$ we denote by $Y_{h\le t}$ the full subcomplex of $Y$ spanned by vertices $v$ with $h(v)\le t$ (with $Y_{t\le h}$, $Y_{h<t}$, and $Y_{t<h}$ defined analogously).

For $t<s$ write $Y_{t<h\le s}$ for $Y_{t<h}\cap Y_{h\le s}$. Note that the above setup ensures that for $t<s$, $Y_{h\le s}$ equals the union of $Y_{h\le t}$ with the descending stars of all the vertices in $Y_{t<h\le s}$. Hence to understand the relationship between the homotopy types of $Y_{h\le t}$ and $Y_{h\le s}$, we would like to attach these vertices in an order such that each such vertex gets glued in along a relative link equal to its descending link. This will ensure that $Y_{h\le s}$ is obtained from $Y_{h\le t}$ by coning off all these descending links, so if the descending links have nice enough homotopy types we will consequently understand how the homotopy type changes from $Y_{h\le t}$ to $Y_{h\le s}$ (see, e.g., Corollary~\ref{cor:morse}). The following Morse Lemma says such an order always exists.

\begin{lemma}[Morse Lemma]
Let $(h,f) \colon Y \to \R\times \R$ be a descending-type Morse function and let $t<s$. Then there is a well order on the vertices of $Y_{t<h\le s}$ such that upon attaching these vertices to $Y_{h\le t}$ in this order to obtain $Y_{h\le s}$, the relative link of each vertex equals its descending link.
\end{lemma}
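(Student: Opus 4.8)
The plan is to construct the required well order by transfinite recursion, using the descending-type condition precisely to rule out the pathology that would prevent such a recursion from terminating correctly. The key structural observation is that the vertices of $Y_{t<h\le s}$ carry a natural partial order coming from the Morse function: declare $v \prec w$ if $v$ and $w$ are adjacent and $(h,f)(v) < (h,f)(w)$, and take the transitive closure. A chain descending in this partial order is exactly a descending ray in the sense of Definition~\ref{def:morse}, so the descending-type hypothesis guarantees that $h$ values along any such chain are unbounded below; since all these vertices lie in $Y_{t<h\le s}$ where $h$ is bounded below by (exceeding) $t$, \emph{there can be no infinite descending chain}. This is the crux: the partial order $\prec$ restricted to $Y_{t<h\le s}$ is well-founded.

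Given well-foundedness, I would produce the well order as follows. First extend $\prec$ to a linear order on the vertices of $Y_{t<h\le s}$; because $\prec$ has no infinite descending chains, any linearization can be chosen to remain well-founded, hence is a well order (one may, for instance, linearly order by $(h,f)$ value when the values differ, breaking the remaining ties arbitrarily but compatibly with well-foundedness). The point of linearizing compatibly with $\prec$ is that it makes lower vertices get attached before higher ones. I would then check that attaching the vertices of $Y_{t<h\le s}$ to $Y_{h\le t}$ in this order does indeed realize $Y_{h\le s}$, which follows from the already-noted fact that $Y_{h\le s}$ is the union of $Y_{h\le t}$ with the descending stars of the vertices in $Y_{t<h\le s}$.

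The heart of the verification is the claim about relative links: when a vertex $v$ is attached, the relative link along which it is glued equals $\dlk(v)$. Here I would argue that a cell $c$ containing $v$ contributes to the relative link of $v$ at the moment of attachment precisely when all vertices of $c$ other than $v$ have already been glued in (i.e.\ are $\prec v$ or lie in $Y_{h\le t}$), which by the compatibility of the order with $\prec$ happens exactly when $v$ is the $(h,f)$-maximal vertex of $c$ --- and that is the defining condition for $c$ to lie in the descending star $\dst(v)$. Since $(h,f)$ is affine and non-constant on each cell, it attains its maximum at a unique vertex, so there is no ambiguity in ``maximal vertex,'' and the relative link is spanned by exactly the faces of cells in $\dst(v)$ not containing $v$, which is the definition of $\dlk(v)$.

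The main obstacle I anticipate is not any single computation but making the well-foundedness argument airtight in the presence of possibly uncountably many vertices and cells of arbitrary (even infinite) dimension. In particular one must be careful that ``no infinite descending chain'' genuinely upgrades a linearization of $\prec$ to a genuine well order, and that the transfinite attaching process is well defined at limit ordinals (where one takes unions, and must confirm the union of the previously attached subcomplexes together with $Y_{h\le t}$ is again a subcomplex and that the relative-link computation is unaffected by passing to the limit). Once well-foundedness and the maximal-vertex characterization of the descending star are established, the relative-link identity is essentially forced, so I expect the descending-type hypothesis to be doing all the real work.
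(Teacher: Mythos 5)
Your proposal follows the paper's proof essentially step for step: the same partial order (the transitive closure of ``adjacent with smaller $(h,f)$-value''), the same well-foundedness argument (an infinite descending chain would lie along a descending ray, forcing $h$ below $t$), and the same final step of extending to a well-founded linear extension, which is automatically a well order compatible with the attaching requirement. One caveat: your parenthetical recipe for the linearization --- ordering by $(h,f)$-value when the values differ --- does not actually work, because the set of $(h,f)$-values of vertices of $Y_{t<h\le s}$ need not be well ordered in $\R\times\R$; for instance, infinitely many pairwise non-adjacent vertices with $h$-values $t+\tfrac{1}{n}$ satisfy the descending-type hypothesis vacuously, yet ordering them by value gives an infinite strictly decreasing sequence. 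The existence of a well-founded linear extension of a well-founded partial order is a genuine theorem rather than a routine choice, and this is exactly where the paper invokes the Szpilrajn Theorem in the form given by Bonnet--Pouzet; your argument should cite that (or reprove it via ordinal ranks: order vertices by rank and well-order each rank level arbitrarily) rather than lean on the value-ordering suggestion.
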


\begin{proof}
Since adjacent vertices have different $(h,f)$ values, it suffices to prove that there exists a well order $\preceq$ on the vertices of $Y_{t<h\le s}$ such that whenever two such vertices $v,w\in Y_{t<h\le s}^{(0)}$ are adjacent and satisfy $(h,f)(v)<(h,f)(w)$, we have $v\prec w$. First define a partial order $\sqsubseteq$ on $Y_{t<h\le s}^{(0)}$ by declaring that $v\sqsubseteq w$ if there exists a sequence $v=v_1,\dots,v_k=w$ of vertices in $Y_{t<h\le s}^{(0)}$ such that for each $1\le i\le k-1$, $v_i$ is adjacent to $v_{i+1}$ and we have $(h,f)(v_i)<(h,f)(v_{i+1})$. In particular whenever $v,w\in Y_{t<h\le s}^{(0)}$ are adjacent and satisfy $(h,f)(v)<(h,f)(w)$, we have $v\sqsubset w$. If $v_1\sqsupset v_2\sqsupset \cdots$ is a strictly decreasing chain then $v_1,v_2,\dots$ are contained in a descending ray, so Definition~\ref{def:morse} ensures that the $h$ values of the $v_i$ eventually drop below $t$; in particular any such chain is finite. We can extend the partial order $\sqsubseteq$ on $Y_{t<h\le s}^{(0)}$ to a total order $\preceq$ via the Szpilrajn Theorem while preserving the property that any strictly decreasing chain $v_1\succ v_2 \succ \cdots$ is finite; see for example \cite{bonnet82}. In particular $\preceq$ is a well order, and since it extends $\sqsubseteq$ it is true that whenever $v,w\in Y_{t<h\le s}^{(0)}$ are adjacent and satisfy $(h,f)(v)<(h,f)(w)$, we have $v\prec w$.
\end{proof}

For example if all the descending links are contractible, then the homotopy type does not change from $Y_{h\le t}$ to $Y_{h\le s}$. More generally, standard tools like the Seifert--van Kampen Theorem, Mayer--Vietoris sequence, and Hurewicz Theorem can reveal how the homotopy type changes upon coning off a descending link. In practice, one tends to hope that many of the descending links are contractible.

Let us look at some previous examples and see how descending links behave.

\begin{example}
In Example~\ref{ex:dim} using nothing but $\dim$, the descending link of every simplex in $Z$ (i.e., vertex of the barycentric subdivision $Z'$) is just the boundary of the simplex. As we said earlier, this is not especially useful, since no descending links are contractible.
\end{example}

\begin{example}[Forman descending links]
Consider Example~\ref{ex:forman}, with the Forman-style Morse function $h\colon (Z')^{(0)}\to\R$ and the Definition~\ref{def:morse}-style Morse function $(h,-\dim)\colon Z'\to\R\times\R$. It is easy to see that if a simplex $\sigma$ has a proper face $\sigma'$ with $h(\sigma')\ge h(\sigma)$ then it has a codimension-$1$ such face containing all such faces. Similarly if $\sigma$ has a proper coface $\widetilde{\sigma}$ with $h(\widetilde{\sigma})\le h(\sigma)$ then it has a codimension-$1$ such coface contained in all such cofaces. This shows that the descending link of a redundant simplex $\sigma$ is the join of its boundary with a cone on the one codimension-$1$ coface $\widetilde{\sigma}$ with $h(\widetilde{\sigma})\le h(\sigma)$, hence is contractible. Also, the descending link of a collapsible simplex $\sigma$ is contained in the boundary of $\sigma$ minus the one codimension-$1$ face $\sigma'$ with $h(\sigma')\ge h(\sigma)$, so is a cone on the one vertex of $\sigma$ not contained in $\sigma'$. In particular the descending link is contractible in both of these cases. The descending link of a critical simplex is simply its boundary. All of this syncs up with the situation framed using Forman's language; in particular one can remove all redundant and collapsible simplices along their relative links (which equal their descending links) and get a homotopy equivalent complex corresponding to just the critical simplices. Similar observations hold for the situation from Example~\ref{ex:brown}.
\end{example}

If one cares less about precise homotopy type and more about high degrees of connectedness, then the following is a particularly useful corollary of the Morse Lemma.

\begin{corollary}\label{cor:morse}
If for all vertices $v$ with $t<h(v)\le s$ we have that $\dlk(v)$ is $(n-1)$-connected then the inclusion $Y_{h\le t}\to Y_{h\le s}$ induces an isomorphism in $\pi_k$ for $k\le n-1$ and a surjection in $\pi_n$.
\end{corollary}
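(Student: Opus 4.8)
The plan is to combine the Morse Lemma with the standard homotopy-theoretic fact that coning off an $(n-1)$-connected subcomplex does not affect $\pi_k$ for $k\le n-1$ and can only kill (never create) elements of $\pi_n$. First I would invoke the Morse Lemma to obtain a well order $\preceq$ on the vertices of $Y_{t<h\le s}$ so that building up from $Y_{h\le t}$ to $Y_{h\le s}$ by attaching these vertices one at a time (transfinitely) has the property that each vertex $v$ is glued in along its descending link $\dlk(v)$. Attaching $v$ along $\dlk(v)$ means forming the union with the cone on $\dlk(v)$ (the descending star $\dst(v)$, with $v$ as cone point), so at each stage we are coning off a copy of $\dlk(v)$ inside the complex constructed so far.

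Next I would record the local homotopy-theoretic input. If $A$ is a subcomplex of $W$ and we form $W\cup_A CA$ (attaching a cone on $A$), then when $A$ is $(n-1)$-connected the inclusion $W\hookrightarrow W\cup_A CA$ induces an isomorphism on $\pi_k$ for $k\le n-1$ and a surjection on $\pi_n$. This is the standard consequence of examining the long exact sequence of the pair (or applying Seifert--van Kampen for $\pi_1$ together with a relative Hurewicz / excision argument in higher degrees): the relative homotopy groups $\pi_k(W\cup_A CA, W)$ vanish for $k\le n$ because they agree with $\pi_k(CA, A)\cong\pi_{k-1}(A)$-type data, which is trivial through degree $n$ by the connectivity hypothesis. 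Since by assumption every descending link $\dlk(v)$ for $t<h(v)\le s$ is $(n-1)$-connected, each individual attaching step has exactly this effect.

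The main work, and the step I expect to be the genuine obstacle, is passing from the single-vertex statement to the full inclusion $Y_{h\le t}\hookrightarrow Y_{h\le s}$, because the well order $\preceq$ may be transfinite rather than a simple sequence. I would therefore set up a transfinite induction along $\preceq$, letting $W_\alpha$ denote $Y_{h\le t}$ together with the descending stars of all vertices $\preceq$ the $\alpha$-th vertex; the successor case is exactly the single-vertex computation above, and at limit ordinals I would use the fact that homotopy groups commute with directed colimits of inclusions (any sphere or disk is compact, so any map $S^k\to Y_{h\le s}$ or nullhomotopy factors through some $W_\alpha$). This compactness argument lets me conclude that $\pi_k(Y_{h\le t})\to\pi_k(Y_{h\le s})$ is an isomorphism for $k\le n-1$ and a surjection for $k=n$, since each property is preserved under successor steps and survives passage to the colimit. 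The only subtlety to handle carefully is that injectivity on $\pi_k$ and surjectivity on $\pi_{n}$ are each preserved at limit stages, which again follows from the compactness of spheres and disks.
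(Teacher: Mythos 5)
Your proposal is correct and follows essentially the same route as the paper: invoke the Morse Lemma to attach vertices in a well order along their descending links, apply the standard fact that coning off an $(n-1)$-connected subcomplex induces isomorphisms in $\pi_k$ for $k\le n-1$ and a surjection in $\pi_n$, and conclude by transfinite induction. The paper's proof is just a terser version of yours; your explicit treatment of limit ordinals via compactness of spheres and disks is the detail the paper leaves implicit in the phrase ``by transfinite induction.''
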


\begin{proof}
By the Morse Lemma, we can build up from $Y_{h\le t}$ to $Y_{h\le s}$ by gluing in the missing vertices (together with their descending stars) along their descending links. Hence whenever we attach a new vertex we are coning off an $(n-1)$-connected relative link, which induces an isomorphism in $\pi_k$ for $k\le n-1$ and a surjection in $\pi_n$. By transfinite induction (which applies since the order of attaching all the vertices in $Y_{t<h\le s}$ is a well order) we conclude that the inclusion $Y_{h\le t}\to Y_{h\le s}$ also induces these sorts of maps, and we are done.
\end{proof}

\begin{example}
Using a Morse function given by nothing but $\dim$ on the barycentric subdivision, as in Example~\ref{ex:dim}, Corollary~\ref{cor:morse} implies the standard fact that a CW complex is $(n-1)$-connected if and only if its $n$-skeleton is.
\end{example}

\section{Vietoris--Rips complexes and the Morse condition}\label{sec:rips}

Given a metric space $X$ and a parameter $t$, a natural simplicial flag complex one can produce is the Vietoris--Rips complex $\rips_t(X)$. (A simplicial complex is \emph{flag} if every finite collection of vertices pairwise spanning edges spans a simplex.) We recall the definition here.

\begin{definition}[Vietoris--Rips complex]\label{def:rips}
Let $(X,d)$ be a metric space and $t\in \R\cup\{\infty\}$. The \emph{Vietoris--Rips complex with parameter $t$}, denoted $\rips_t(X)$, is the simplicial flag complex whose vertex set is $X$ and whose edge set consists of all $\{x,x'\}$ such that $d(x,x')\le t$. When $t=\infty$ we will write $\rips(X)$ for $\rips_\infty(X)$. For $t<\infty$ we will refer to $\rips_t(X)$ as a \emph{proper} Vietoris--Rips complex for $X$.
\end{definition}

Note that if $t\le 0$ then $\rips_t(X)$ just equals its vertex set $X$. Also note that for $t\le t'$ we have $\rips_t(X)\le \rips_{t'}(X)$, and that $\rips(X)$ is the infinite simplex on the vertex set $X$. We will view $\{\rips_t(X)\}_{t\in\R}$ as a filtration of the contractible complex $\rips(X)$.

Consider the barycentric subdivision $\rips(X)'$ of $\rips(X)$. The vertices of $\rips(X)'$ are the simplices of $\rips(X)$, and the simplices of $\rips(X)'$ are chains $\sigma_0<\cdots<\sigma_k$ of simplices of $\rips(X)$. Given a vertex $\sigma$ of $\rips(X)'$ let $\diam(\sigma)$ be the diameter of the set of vertices of $\sigma$ as a subset of $X$, and let $\dim(\sigma)$ be the dimension of $\sigma$ as a simplex in $\rips(X)$. (As usual we also denote by $\diam$ and $\dim$ these functions extended affinely to the simplices of $\rips(X)'$.) The main function of interest in all that follows is
\[
(\diam,-\dim)\colon \rips(X)'\to \R\times\R \text{.}
\]
If we view $\R\times\R$ as being lexicographically ordered, then $\rips_t(X)'$ is the sublevel complex of $\rips(X)'$ determined by the rule $(\diam,-\dim)(\sigma)\le (t,0)$. In particular one might now hope to use Morse theory to understand the $\rips_t(X)'$, and consequently the $\rips_t(X)$, since $\rips_t(X)\cong \rips_t(X)'$. The following Morse Criterion will turn out to precisely describe when $(\diam,-\dim)$ is a descending-type Morse function (see Lemma~\ref{lem:morse_criterion}).

\begin{definition}[Morse Criterion]
We say a metric space $X$ satisfies the \emph{Morse Criterion} if every ball is finite, and there does not exist an infinite alternating chain $\sigma_1<\sigma_2>\sigma_3<\cdots$ of simplices in $\rips(X)$ with $\diam(\sigma_i)=\diam(\sigma_{i+1})$ for all odd $i$ and $\diam(\sigma_i)>\diam(\sigma_{i+1})$ for all even $i$.
\end{definition}

This is a bit difficult to handle in practice, so let us also present a straightforward sufficient condition. Recall that a metric space $X$ is \emph{proper} if closed balls are compact. Let us call $X$ \emph{topologically discrete} if the metric generates the discrete topology (``discrete metric space'' usually means all non-zero distances are $1$ so we will not use that term). Note that if the set of diameters of finite subsets of $X$ is discrete in $\R$ then $X$ is topologically discrete, but the converse is not true in general (for instance $X=\{1,1+\frac{1}{2},1+\frac{1}{2}+\frac{1}{3},\dots\}$ with the metric induced from the usual metric on $\R$).

\begin{observation}[Sufficient condition]\label{obs:discrete}
Let $X$ be a proper metric space in which the set of diameters of finite subsets of $X$ is closed and discrete in $\R$. Then $X$ satisfies the Morse Criterion.
\end{observation}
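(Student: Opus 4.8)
The plan is to verify the two clauses of the Morse Criterion directly from the two hypotheses. The Morse Criterion requires (a) that every ball be finite, and (b) that there be no infinite alternating chain $\sigma_1 < \sigma_2 > \sigma_3 < \cdots$ of simplices in $\rips(X)$ with $\diam(\sigma_i) = \diam(\sigma_{i+1})$ for odd $i$ and $\diam(\sigma_i) > \diam(\sigma_{i+1})$ for even $i$.

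For clause (a), I would first note that a ball in a proper metric space is a subset of a compact closed ball, hence has compact closure. In a topologically discrete space a compact set is finite, so every ball is finite. The point here is that properness gives compactness of closed balls, and discreteness of the diameter set forces the topology to be discrete (as observed in the paragraph preceding the statement), and compact discrete sets are finite. So the main content is the simple implication: proper $+$ topologically discrete $\Rightarrow$ balls are finite. I expect this step to be entirely routine.

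For clause (b), the key idea is to exploit the monotonicity built into the alternating chain. Observe that along such a chain the diameters satisfy $\diam(\sigma_1) = \diam(\sigma_2) > \diam(\sigma_3) = \diam(\sigma_4) > \cdots$, so reading off the values at the odd (equivalently at every other) positions produces a strictly decreasing sequence of real numbers $\diam(\sigma_1) > \diam(\sigma_3) > \diam(\sigma_5) > \cdots$ all lying in the set of diameters of subsets of $X$. By hypothesis this set is discrete in $\R$. An infinite strictly decreasing sequence in a discrete subset of $\R$ must be bounded below (otherwise it accumulates only at $-\infty$, which is fine, but) — more carefully, a strictly decreasing sequence in a set that is discrete and also bounded below would have to terminate; the real obstacle is ruling out a sequence that marches off to $-\infty$. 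But diameters are nonnegative, so the sequence is bounded below by $0$, and a strictly decreasing sequence in $[0,\infty)$ whose image lies in a discrete set can only have finitely many terms. This contradicts the assumption that the alternating chain is infinite, completing clause (b).

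The one subtlety worth flagging is the interaction between the two clauses in clause (b): the definition of the alternating chain only imposes $\diam(\sigma_i) = \diam(\sigma_{i+1})$ for \emph{odd} $i$ and strict inequality for \emph{even} $i$, so one must be careful to extract the decreasing subsequence from the correct positions rather than assuming the full chain is weakly decreasing in diameter — indeed the odd-indexed strict drops are exactly what discreteness rules out. I expect this bookkeeping to be the only place requiring genuine (if still minor) attention; the rest is a direct translation of the hypotheses. No step here should require the full strength of properness beyond clause (a), and discreteness of the diameter set is used only for clause (b), so the two hypotheses of the Observation map cleanly onto the two clauses of the Morse Criterion.
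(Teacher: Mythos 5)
Your proof is correct and takes essentially the same route as the paper: properness plus topological discreteness (forced by discreteness of the diameter set) gives finite balls, and the odd-indexed diameters $\diam(\sigma_1)>\diam(\sigma_3)>\diam(\sigma_5)>\cdots$ form an infinite strictly decreasing sequence in $[0,\infty)$, which cannot lie in a discrete subset of $\R$. The only difference is presentational --- your detour about sequences tending to $-\infty$ is unnecessary since diameters are nonnegative, exactly as you conclude.
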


\begin{proof}
First, the assumption ensures that $X$ is topologically discrete, so closed balls are compact and discrete, hence finite. Next, given an infinite chain of simplices as in the Morse Criterion, $\diam(\sigma_1),\diam(\sigma_3),\diam(\sigma_5),\dots$ is a strictly decreasing sequence. Any such sequence in $[0,\infty)$ cannot lie in a closed discrete subset.
\end{proof}

Of course the set of diameters of finite subsets of $X$ coincides with the set of distances between points in $X$, which may seem like a better way to phrase things, but phrasing things in terms of diameters will be convenient here.

An important example to keep in mind of a metric space satisfying the conditions in Observation~\ref{obs:discrete} is the vertex set of a connected locally finite graph, with the metric induced from the path metric on the graph. We remark that metric spaces can satisfy the Morse Criterion without satisfying the condition in Observation~\ref{obs:discrete}, for instance this is the case for $X=\{1,1+\frac{1}{2},1+\frac{1}{2}+\frac{1}{3},\dots\}$. Also, one can check that the finite balls condition and the alternating chain condition in the Morse Criterion do not imply one another.

\begin{remark}\label{rmk:geodesic}
Somewhat opposite to metric spaces with discrete sets of diameters of finite subsets is geodesic metric spaces, where a continuum of diameters is possible. No non-trivial geodesic metric space can possibly satisfy the Morse Criterion. First of all, it (badly) fails the requirement that balls be finite. Secondly, it can admit a ``bad'' alternating chain, for example $\{0,\frac{1}{2},1\}\ge \{0,\frac{1}{2}\}\le \{0,\frac{1}{3},\frac{1}{2}\}\ge\cdots$ in $\rips([0,1])$. However, as Lemma~\ref{lem:lattice} will later explain, we will be very much interested in certain subspaces of geodesic spaces, which can satisfy the Morse Criterion.
\end{remark}

The purpose of the Morse Criterion is the following:

\begin{lemma}\label{lem:morse_criterion}
The function $(\normalfont{\diam,-\dim})\colon \rips(X)'\to\R\times\R$ is a descending-type Morse function if and only if the Morse Criterion holds for $X$.
\end{lemma}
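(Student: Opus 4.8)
The plan is to unpack what it means for $(\diam,-\dim)$ to be a descending-type Morse function in the sense of Definition~\ref{def:morse}, and to match each hypothesis against one of the two conditions in the Morse Criterion. Recall that $(\diam,-\dim)$ is a descending-type Morse function precisely when, along every descending ray $\sigma_1,\sigma_2,\dots$ in $\rips(X)'$, the $\diam$ values $\diam(\sigma_1),\diam(\sigma_2),\dots$ have no lower bound in $\R$. So the whole statement reduces to showing that this ``no lower bound along descending rays'' condition is equivalent to the conjunction of the finite-balls condition and the alternating-chain condition. First I would record the trivial observations that $\diam$ and $-\dim$ are affine on cells of $\rips(X)'$ (they are defined on vertices and extended affinely), and that $(\diam,-\dim)$ is non-constant on edges: an edge of $\rips(X)'$ corresponds to a strict face relation $\sigma<\tau$ in $\rips(X)$, so either $\diam(\sigma)\ne\diam(\tau)$, or $\diam(\sigma)=\diam(\tau)$ and $\dim(\sigma)<\dim(\tau)$ forces the $-\dim$ coordinates to differ. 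Thus $(\diam,-\dim)$ always satisfies the structural part of Definition~\ref{def:morse}, and the content is entirely in the ray condition.

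The heart of the argument is to analyze descending rays. A ray in $\rips(X)'$ is a sequence of simplices $\sigma_1,\sigma_2,\dots$ of $\rips(X)$ where consecutive ones are joined by an edge, i.e.\ one is a face of the other. The descending condition $(\diam,-\dim)(\sigma_i)>(\diam,-\dim)(\sigma_{i+1})$ in the lexicographic order means that at each step either $\diam$ strictly drops, or $\diam$ stays equal and $\dim$ strictly increases. I would split a descending ray into its ``$\diam$-constant runs'': maximal blocks on which $\diam$ is constant. Within such a run the dimension strictly increases at every step, and since adjacent simplices differ by a face relation, the simplices in a run form a strictly increasing chain $\sigma_i<\sigma_{i+1}<\cdots$ all of the same diameter. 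Two adjacent runs are separated by a step where $\diam$ strictly decreases, and at that step the face relation can go either way. The key structural point is that by passing to the first and last simplex of each run one extracts exactly an alternating chain $\tau_1<\tau_2>\tau_3<\cdots$ with $\diam$ constant across each $<$ in a run and strictly decreasing across each $>$ between runs—this is precisely the shape of the forbidden chain in the Morse Criterion.

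For the equivalence itself I would argue both directions by contraposition. For one direction, suppose $X$ fails the Morse Criterion. If some ball is infinite, then infinitely many vertices of $X$ lie within a fixed bounded diameter, and I can construct a descending ray that cycles through larger and larger simplices of bounded diameter (increasing dimension while $\diam$ stays bounded), producing a descending ray whose $\diam$ values are bounded below—so $(\diam,-\dim)$ is not a descending-type Morse function. If instead there is an infinite alternating chain $\sigma_1<\sigma_2>\sigma_3<\cdots$ with the stated diameter behavior, then $\diam$ is bounded below along it (it is a decreasing sequence of nonnegative reals, hence has an infimum), and I can realize this alternating chain as (or thread it into) a genuine descending ray in $\rips(X)'$ by inserting, along each $<$ edge, the intermediate faces needed so that every step is an edge of the barycentric subdivision; the resulting ray is descending and has $\diam$ bounded below. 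Conversely, if $(\diam,-\dim)$ is not a descending-type Morse function, there is a descending ray along which $\diam$ is bounded below; then either only finitely many distinct $\diam$ values occur (forcing some $\diam$-constant run to be infinite, which needs infinitely many simplices of bounded diameter and hence an infinite ball), or infinitely many distinct values occur (and the run-endpoint extraction above yields the forbidden infinite alternating chain).

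The main obstacle I anticipate is the bookkeeping in the run decomposition: I must be careful that passing between an abstract ray in $\rips(X)'$ and a chain of simplices in $\rips(X)$ is faithful, in particular that a descending step where $\diam$ strictly decreases together with the face relation $\sigma_i<\sigma_{i+1}$ or $\sigma_i>\sigma_{i+1}$ aligns correctly with the $<$ and $>$ pattern demanded in the Morse Criterion, and that the ``constant-$\diam$, increasing-$\dim$'' runs genuinely correspond to strict ascents of simplices rather than stalling. A secondary subtlety is linking the finiteness of balls to the non-existence of infinite $\diam$-constant runs: I would argue that an infinite strictly increasing chain of simplices of diameter at most $t$ forces infinitely many vertices inside a set of diameter at most $t$, hence inside some ball of radius $t$, contradicting finiteness of balls—and conversely that an infinite ball gives room to build such a run. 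Once these translations are pinned down, the equivalence follows by matching each failure mode of the Morse Criterion to a descending ray with bounded-below $\diam$.
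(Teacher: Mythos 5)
Your overall architecture is the same as the paper's: both directions by contraposition, with the core being the decomposition of a descending ray into $\diam$-constant stretches (your ``runs'' correspond to the paper's ``continues up'' steps, your run boundaries to its ``turns''), ruling out an infinite tail of up-moves via finiteness of balls, ruling out an infinite tail of down-moves because no simplex has an infinite chain of proper faces, and extracting the forbidden alternating chain otherwise. That half of the argument (descending ray $\Rightarrow$ Morse Criterion fails) is essentially correct, but note one inaccuracy: at a step where $\diam$ strictly decreases the face relation cannot ``go either way'' --- a face of a simplex has diameter at most that of the simplex, so a strict drop in $\diam$ forces $\sigma_i>\sigma_{i+1}$. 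Your run-endpoint extraction silently depends on exactly this fact (otherwise the extracted chain would not alternate $<,>,<,>,\dots$), so you should assert it rather than its negation; you also need to compose consecutive $>$-steps coming from length-one runs, which is fine since face relations are transitive, but should be said.

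The genuine gap is in the infinite-ball case of the other direction. You propose a descending ray that cycles through ``larger and larger simplices of bounded diameter (increasing dimension while $\diam$ stays bounded).'' Bounded is not enough: by the lexicographic order, a descending step that increases dimension must keep $\diam$ \emph{exactly constant}; if $\diam$ increases at all, the step is ascending and the ray is not descending. Moreover, constancy is not something an infinite ball hands you for free: knowing $d(x,z_i)\le d(x,y)$ for infinitely many $z_i$ controls only distances to the center $x$, not the pairwise distances $d(y,z_i)$ and $d(z_i,z_j)$, which may exceed $d(x,y)$; so a chain like $\{x,y\}<\{x,y,z_1\}<\{x,y,z_1,z_2\}<\cdots$ need not have constant diameter and need not be descending. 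What the construction actually requires is an infinite set of points that are \emph{pairwise} within distance $t$ of one another, with $t$ realized as the diameter of some finite subset; only then does adding points one at a time keep $\diam$ constant. (Be aware that the paper's own proof of this case is terse on precisely this point --- it asserts constant diameter for the chain above while only controlling distances to $x$ --- so a repair should proceed by extracting pairwise control, not by appeal to that step.) Separately, your plan to ``insert intermediate faces'' to turn the alternating chain into a ray reflects a misunderstanding of the barycentric subdivision: $\sigma$ and $\tau$ are adjacent in $\rips(X)'$ whenever $\sigma<\tau$, in any codimension, so the alternating chain already \emph{is} a descending ray. The insertion is harmless along $\diam$-constant $<$-steps (all intermediate faces have the same diameter), but if done along the $>$-steps it could create ascending steps and break the argument.
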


\begin{proof}
First assume the Morse Criterion holds. By construction $\diam$ and $-\dim$ are affine on simplices. Adjacent vertices in $\rips(X)'$ have different $-\dim$ values, hence different $(\diam,-\dim)$ values. We need to show that along any descending ray the $\diam$ values have no lower bound; since $\diam$ is bounded below by $0$ this is actually equivalent to saying that no descending rays exist. Suppose a descending ray $\sigma_1,\sigma_2,\dots$ does exist, so for each $i$ we have $(\diam,-\dim)(\sigma_i)>(\diam,-\dim)(\sigma_{i+1})$, and note that either $\sigma_i<\sigma_{i+1}$ or $\sigma_i>\sigma_{i+1}$. For a given $i$, we either have $\diam(\sigma_i)>\diam(\sigma_{i+1})$ in which case $\sigma_i>\sigma_{i+1}$, or $\diam(\sigma_i)=\diam(\sigma_{i+1})$ and $-\dim(\sigma_i)>-\dim(\sigma_{i+1})$ in which case $\sigma_i<\sigma_{i+1}$. Say that the ray \emph{turns up} at $i$ if $\sigma_{i-1}>\sigma_i<\sigma_{i+1}$, \emph{turns down} at $i$ if $\sigma_{i-1}<\sigma_i>\sigma_{i+1}$, \emph{continues up} at $i$ if $\sigma_{i-1}<\sigma_i<\sigma_{i+1}$, and \emph{continues down} at $i$ if $\sigma_{i-1}>\sigma_i>\sigma_{i+1}$. If there exists $n$ such that the ray continues up for all $i\ge n$ then there is a ball in $X$ with infinitely many elements, which the Morse Criterion rules out. It is impossible that there exists $n$ such that the ray continues down for all $i\ge n$, since no simplicial complex allows for an infinite sequence of proper faces. The remaining case is that for all $n$ there exist $i,j\ge n$ such that the ray turns up at $i$ and turns down at $j$. Then by passing to a subsequence we can assume without loss of generality that $\sigma_1<\sigma_2>\sigma_3<\cdots$. But now $\diam(\sigma_i)=\diam(\sigma_{i+1})$ for all odd $i$ and $\diam(\sigma_i)>\diam(\sigma_{i+1})$ for all even $i$, which contradicts the Morse Criterion.

Now suppose the Morse Criterion fails. First assume there is a ball with infinitely many points in it, so there exist $x,y\in X$ with infinitely many $z_1,z_2,\dots$ satisfying $d(x,z_i)\le d(x,y)$. Then $\{x,y\}<\{x,y,z_1\}<\{x,y,z_1,z_2\}<\cdots$ is a descending ray along which $\diam$ is constant, so $(\diam,-\dim)$ is not a descending-type Morse function. Now assume there is an infinite alternating chain $\sigma_1<\sigma_2>\sigma_3<\cdots$ of simplices in $\rips(X)$ with $\diam(\sigma_i)=\diam(\sigma_{i+1})$ for all odd $i$ and $\diam(\sigma_i)>\diam(\sigma_{i+1})$ for all even $i$. Then $\sigma_1,\sigma_2,\dots$ form a descending ray in $\rips(X)'$ along which $\diam$ is bounded below (by $0$), so $(\diam,-\dim)$ is not a desending-type Morse function.
\end{proof}

While $(\diam,-\dim)$ is certainly not the only possible Morse function realizing the $\rips_t(X)'$ as sublevel complexes, it has especially natural descending links: the ``descending moves'' are that one can either add new vertices without raising the diameter, or remove vertices to strictly lower the diameter. The reader may have noticed that $(\diam,\dim)$ also realizes the $\rips_t(X)'$ as sublevel complexes (and even avoids having to assume the analog of the Morse Criterion to be a Morse function), but the descending link of $\sigma$ with respect to $(\diam,\dim)$ is just the boundary of $\sigma$, and this is not especially useful for anything since then there are no contractible descending links.

\subsection{Descending links}\label{sec:dlks}

Let us more rigorously pin down the descending links. For a given $\sigma$, the descending link $\dlk(\sigma)$ of $\sigma$ in $\rips(X)'$ is spanned by faces $\sigma'<\sigma$ with $\diam(\sigma')<\diam(\sigma)$, called \emph{descending faces}, and proper cofaces $\widetilde{\sigma}>\sigma$ with $\diam(\widetilde{\sigma})=\diam(\sigma)$, called \emph{descending cofaces}. Viewing $\sigma$ as a finite subset of $X$, a descending face $\sigma'\subseteq \sigma$ is obtained by removing points so as to make the diameter go strictly down, and a descending coface $\widetilde{\sigma}\supsetneq\sigma$ is obtained by adding new points so as to avoid making the diameter go up.

Since every descending face of $\sigma$ is a face of every descending coface, $\dlk(\sigma)$ is the simplicial join of the subcomplex spanned by the descending faces, called the \emph{descending face link} $\dflk(\sigma)$, and the subcomplex spanned by the descending cofaces, called the \emph{descending coface link} $\dclk(\sigma)$. Now for example if either $\dflk(\sigma)$ or $\dclk(\sigma)$ is contractible, then $\dlk(\sigma)$ is contractible.

Assuming the Morse Criterion holds, for any $t<s$ we have that $\rips_s(X)$ is obtained from $\rips_t(X)$ by gluing in simplices (or more precisely the cones on such simplices) $\sigma$ with $t<\diam(\sigma)\le s$ along $\dlk(\sigma)$. This provides a potentially useful algorithm. For example, if every $\sigma$ with $t<\diam(\sigma)\le s$ has a contractible descending link, then $\rips_s(X)$ is homotopy equivalent to $\rips_t(X)$. This situation will be discussed more later.

\section{The Link Criterion}\label{sec:links}

As before, $(X,d)$ is a (non-empty) metric space. We introduce a condition that will ensure well behaved descending links. Here $B_t(x)\defeq \{x'\in X\mid d(x,x')<t\}$ and $\overline{B}_t(x)\defeq \{x'\in X\mid d(x,x')\le t\}$. (Note that $\overline{B}_t(x)$ does not necessarily equal the closure of $B_t(x)$, this is just notation.)

\begin{definition}[Link Criterion]
Let $X$ be a metric space and $F\subseteq X$ a finite subset with $\diam(F)=t$. We say $F$ satisfies the \emph{Link Criterion} if either there exists $z\in F$ such that $F\subseteq B_t(z)$ or there exists $z\in X\setminus F$ such that $\bigcap_{f\in F}\overline{B}_t(f) \subseteq \overline{B}_t(z)$.
\end{definition}

\begin{definition}[Strong Link Criterion]
We say $X$ satisfies the \emph{Strong Link Criterion at scale $t$} if for all $x,y\in X$ with $d(x,y)=t$, there exists $z\in X$ such that $\overline{B}_t(x)\cap \overline{B}_t(y) \subseteq B_t(z)$.
\end{definition}

Given a subset $I\subseteq \R$, we will say that $X$ \emph{satisfies the Link Criterion in the range $I$} if every finite $F\subseteq X$ with $\diam(F)\in I$ satisfies the Link Criterion. Similarly, we will say $X$ \emph{satisfies the Strong Link Criterion in the range $I$} if it satisfies the Strong Link Criterion at scale $t$ for all $t\in I$. In practice we will be most interested in the case when $I$ is an interval, e.g., of the form $[N,M]$, $(N,M]$, $[N,\infty)$, etc.

\begin{observation}\label{obs:link_condition}
If $X$ satisfies the Strong Link Criterion in the range $I$ then it satisfies the Link Criterion in the range $I$.
\end{observation}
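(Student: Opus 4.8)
The plan is to show the Strong Link Criterion in the range $I$ implies the Link Criterion in the range $I$ by unwinding both definitions and checking that the hypothesis of the stronger condition supplies exactly what the weaker one needs. So I would fix a finite subset $F\subseteq X$ with $\diam(F)=t\in I$ and aim to verify that $F$ satisfies the Link Criterion, i.e., that one of the two alternatives in that definition holds. Since $F$ is finite and $\diam(F)=t$, the diameter is achieved: there exist $x,y\in F$ with $d(x,y)=t$. These two points will be the input to the Strong Link Criterion at scale $t$, which is available precisely because $t\in I$.

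First I would apply the Strong Link Criterion at scale $t$ to the pair $x,y$ to obtain some $z\in X$ with $\overline{B}_t(x)\cap\overline{B}_t(y)\subseteq B_t(z)$. The goal is then to match this against the second alternative in the Link Criterion, namely the existence of $z\in X\setminus F$ with $\bigcap_{f\in F}\overline{B}_t(f)\subseteq\overline{B}_t(z)$. The key containment to establish is
\[
\bigcap_{f\in F}\overline{B}_t(f)\subseteq \overline{B}_t(x)\cap\overline{B}_t(y)\subseteq B_t(z)\subseteq\overline{B}_t(z)\text{.}
\]
The first inclusion is immediate since $x,y\in F$, so intersecting over all of $F$ is at least as restrictive as intersecting over just $\{x,y\}$; the middle inclusion is the conclusion of the Strong Link Criterion; and the last is the trivial $B_t(z)\subseteq\overline{B}_t(z)$. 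This chain gives exactly the containment demanded by the second alternative of the Link Criterion.

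The one genuine subtlety, and the step I expect to require the most care, is the requirement $z\in X\setminus F$ in the Link Criterion, whereas the Strong Link Criterion only guarantees $z\in X$ and could in principle return a $z$ lying in $F$. I would handle this by a case split. If the $z$ produced satisfies $z\notin F$, we are done by the displayed chain. If instead $z\in F$, then since $\bigcap_{f\in F}\overline{B}_t(f)\subseteq B_t(z)$ and $z$ itself lies in that intersection only if $d(z,f)\le t$ for all $f\in F$, one observes that every element of $F$ lies within distance $t$ of every other (as $\diam(F)=t$), and in particular the containment $\overline{B}_t(x)\cap\overline{B}_t(y)\subseteq B_t(z)$ forces $F\subseteq B_t(z)$; since $z\in F$ this realizes the \emph{first} alternative of the Link Criterion, that there exists $z\in F$ with $F\subseteq B_t(z)$. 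Thus in either case one of the two alternatives holds, completing the argument. The whole proof is short and essentially formal once one is careful to route the two cases into the two disjunctive alternatives of the Link Criterion.
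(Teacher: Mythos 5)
Your proof is correct and follows essentially the same route as the paper's: pick $x,y\in F$ realizing $\diam(F)=t$, apply the Strong Link Criterion to get $z$, and run the chain $F\subseteq\bigcap_{f\in F}\overline{B}_t(f)\subseteq\overline{B}_t(x)\cap\overline{B}_t(y)\subseteq B_t(z)\subseteq\overline{B}_t(z)$. The only cosmetic difference is that you split explicitly into the cases $z\in F$ and $z\notin F$, whereas the paper observes in one line that the same chain feeds whichever alternative of the Link Criterion applies.
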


\begin{proof}
Choose $x,y\in F$ with $d(x,y)=\diam(F)$, and set $t\defeq \diam(F)$. Now $F\subseteq \bigcap_{f\in F}\overline{B}_t(f)\subseteq \overline{B}_t(x)\cap \overline{B}_t(y) \subseteq B_t(z)\subseteq \overline{B}_t(z)$, regardless of whether $z$ came from $F$ or $X\setminus F$.
\end{proof}

The point of the Link Criterion is the following result.

\begin{lemma}\label{lem:cible_dlks}
Let $X$ be a metric space and $F\subseteq X$ a finite subset. Let $\sigma$ be the simplex in $\rips(X)$ with vertex set $F$. If $F$ satisfies the Link Criterion then the descending link of $\sigma$ is contractible.
\end{lemma}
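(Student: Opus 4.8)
The plan is to exploit the join decomposition $\dlk(\sigma) = \dflk(\sigma) * \dclk(\sigma)$ recorded just before the statement, together with the fact that a simplicial join is contractible as soon as one of its two factors is. The two alternatives in the Link Criterion correspond exactly to the two factors: the first alternative (a witness $z\in F$) will make the descending face link $\dflk(\sigma)$ contractible, while the second alternative (a witness $z\in X\setminus F$) will make the descending coface link $\dclk(\sigma)$ contractible. In both cases the mechanism is the same: I will exhibit a single vertex that cones off the relevant complex, so that it is a simplicial cone and hence contractible.

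Set $t \defeq \diam(F)$. First I would identify the two factors concretely. The descending faces of $\sigma$ are the nonempty proper subsets $F' \subsetneq F$ with $\diam(F') < t$, so $\dflk(\sigma)$ is the barycentric subdivision of the complex $\Delta\defeq\{F' \subseteq F : \diam(F') < t\}$ (note $F$ itself is excluded since $\diam(F) = t$), hence homotopy equivalent to $|\Delta|$. Dually, writing $C \defeq \bigl(\bigcap_{f\in F}\overline{B}_t(f)\bigr)\setminus F$ for the set of points within distance $t$ of every point of $F$ but not in $F$, a coface $F\cup S$ with $\emptyset \ne S \subseteq X\setminus F$ satisfies $\diam(F\cup S) = t$ precisely when $S\subseteq C$ and $\diam(S)\le t$; hence the poset of descending cofaces is the face poset of $\rips_t(C)$, so $\dclk(\sigma)$ is the barycentric subdivision of $\rips_t(C)$ and is homotopy equivalent to $|\rips_t(C)|$.

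In the first alternative there is $z\in F$ with $F\subseteq B_t(z)$, i.e. $d(z,f)<t$ for every $f\in F$. For any $F'\in\Delta$ I would check that $\diam(F'\cup\{z\}) = \max\bigl(\diam(F'), \max_{f\in F'}d(z,f)\bigr) < t$, so $F'\cup\{z\}\in\Delta$; thus $\Delta$ is a cone with apex $z$ and is contractible, giving contractibility of $\dflk(\sigma)$ and hence of $\dlk(\sigma)$. In the second alternative there is $z\in X\setminus F$ with $\bigcap_{f\in F}\overline{B}_t(f)\subseteq \overline{B}_t(z)$. Here I would first verify $z\in C$: every $f\in F$ lies in $\bigcap_{f'\in F}\overline{B}_t(f')$ (as $d(f,f')\le t$) and hence in $\overline{B}_t(z)$, so $d(z,f)\le t$ for all $f$, placing $z$ in $\bigcap_f\overline{B}_t(f)$, and $z\notin F$ by hypothesis. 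Then for any simplex $S$ of $\rips_t(C)$ every $s\in S\subseteq C\subseteq \overline{B}_t(z)$ satisfies $d(z,s)\le t$, so $\diam(S\cup\{z\})\le t$ and $S\cup\{z\}$ is again a simplex of $\rips_t(C)$; thus $\rips_t(C)$ is a cone with apex $z$, hence contractible, giving contractibility of $\dclk(\sigma)$ and of $\dlk(\sigma)$.

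I expect the only real subtlety --- rather than a genuine obstacle --- to be the bookkeeping in the second case: one must confirm that the witness $z$ actually lies in $C$ (so that it can serve as a cone point of $\rips_t(C)$), which is not literally part of the hypothesis but follows from the containment $\bigcap_f\overline{B}_t(f)\subseteq\overline{B}_t(z)$ applied to the points of $F$ themselves. A minor degenerate point to dispatch is $t=0$ (a single-point $F$), where neither alternative of the Link Criterion can hold, so the statement is vacuous there; for $t>0$ the apex vertices $\{z\}$ genuinely belong to the relevant complexes and the cone arguments go through.
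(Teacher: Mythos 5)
Your proof is correct and follows essentially the same route as the paper: the same case split on the two alternatives of the Link Criterion, the same verification that the witness $z$ lies in $\bigcap_{f\in F}\overline{B}_t(f)$ in the second case, and the same coning-off-by-$z$ mechanism making $\dflk(\sigma)$ (first case) or $\dclk(\sigma)$ (second case) contractible, hence the join $\dlk(\sigma)$ contractible. The only difference is presentational: you identify the two factors explicitly as barycentric subdivisions of the concrete complexes $\Delta$ and $\rips_t(C)$ and exhibit simplicial cones, whereas the paper phrases the same contraction directly on the posets of descending faces and cofaces via $\widetilde{\sigma}\le\widetilde{\sigma}\cup\{z\}\ge\{z\}$.
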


\begin{proof}
Let $t=\diam(\sigma)$. By the Link Criterion we can choose $z\in X$ such that either $z\in X\setminus F$ and $\bigcap_{f\in F}\overline{B}_t(f) \subseteq \overline{B}_t(z)$ (so in particular $z\in \bigcap_{f\in F}\overline{B}_t(f)$), or else $z\in F$ and $F \subseteq B_t(z)$. In the $z\in F$ case, for any descending face $\sigma'$ of $\sigma$, $\sigma'\cup\{z\}$ is also a descending face of $\sigma$ since $\diam(\sigma')<t$ and hence also $\diam(\sigma'\cup\{z\})<t$. In this case $\dflk(\sigma)$ is a cone on the vertex $z$, hence contractible. In the $z\in X\setminus F$ case, $\sigma\cup\{z\}$ is a descending coface of $\sigma$ since $\diam(\sigma\cup\{z\})=\diam(\sigma)$, and moreover for any $w\in X$ such that $\sigma\cup\{w\}$ is a descending coface of $\sigma$ we have that $\sigma\cup\{w,z\}$ is as well, so $\dclk(\sigma)$ is contractible via the conical contraction $\widetilde{\sigma} \le \widetilde{\sigma}\cup\{z\} \ge \{z\}$.
\end{proof}

\begin{theorem}\label{thrm:contractible}
Let $X$ be a metric space satisfying the Morse Criterion, and the Link Criterion in the range $I$. Then for any $(t,s]\subseteq I$ the inclusion $\rips_t(X)\to\rips_s(X)$ is a homotopy equivalence, and for any $(t,\infty)\subseteq I$, $\rips_t(X)$ is contractible.
\end{theorem}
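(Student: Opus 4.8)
The plan is to apply the discrete Morse machinery of Section~\ref{sec:morse} to the function $(\diam,-\dim)\colon \rips(X)'\to\R\times\R$, taking $Y=\rips(X)'$ and letting $h=\diam$ play the role of the first coordinate. First I would record that the sublevel complex $Y_{\diam\le t}$ is precisely $\rips_t(X)'$, the barycentric subdivision of $\rips_t(X)$: a chain of simplices of $\rips(X)$ has all its vertices of diameter $\le t$ exactly when its top simplex does, so the full subcomplex on vertices of diameter $\le t$ is the subdivision of $\rips_t(X)$, and thus $Y_{\diam\le t}\simeq\rips_t(X)$. Because $X$ satisfies the Morse Criterion, Lemma~\ref{lem:morse_criterion} guarantees that $(\diam,-\dim)$ is a descending-type Morse function, so the Morse Lemma applies and $Y_{\diam\le s}$ is built from $Y_{\diam\le t}$ by attaching the vertices $\sigma$ with $t<\diam(\sigma)\le s$ along their descending links.

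For the first assertion, fix $(t,s]\subseteq I$. Every simplex $\sigma$ attached in passing from $\rips_t(X)'$ to $\rips_s(X)'$ has vertex set $F$ with $\diam(F)\in(t,s]\subseteq I$, so $F$ satisfies the Link Criterion and hence $\dlk(\sigma)$ is contractible by Lemma~\ref{lem:cible_dlks}. A contractible complex is $(n-1)$-connected for every $n$, so Corollary~\ref{cor:morse}, applied for all $n$, shows the inclusion $\rips_t(X)'\hookrightarrow\rips_s(X)'$ induces isomorphisms on all homotopy groups. As these are CW complexes, Whitehead's theorem upgrades this weak equivalence to a genuine homotopy equivalence, which transports to $\rips_t(X)\to\rips_s(X)$. (Equivalently one could run the transfinite induction of the Morse Lemma directly, observing that coning off a contractible descending link is a homotopy equivalence at each stage.)

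For the second assertion, fix $(t,\infty)\subseteq I$ and choose any increasing sequence $t<s_1<s_2<\cdots$ with $s_n\to\infty$. Since every finite subset of $X$ has finite diameter, $\rips(X)'=\bigcup_n\rips_{s_n}(X)'$. By the first part each inclusion in the chain $\rips_t(X)'\hookrightarrow\rips_{s_1}(X)'\hookrightarrow\rips_{s_2}(X)'\hookrightarrow\cdots$ is a homotopy equivalence. A compactness argument then shows the inclusion of $\rips_t(X)'$ into the union $\rips(X)'$ is a weak, hence by Whitehead an actual, homotopy equivalence: any map of a sphere into $\rips(X)'$, or any nullhomotopy, has compact image lying in a finite subcomplex, hence in some $\rips_{s_n}(X)'$, where the corresponding homotopy statement already holds. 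Finally $\rips(X)$ is the full simplex on $X$ and so is contractible; therefore $\rips_t(X)'$, and hence $\rips_t(X)$, is contractible.

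I expect the main obstacle to be this passage to the infinite union in the second assertion: one must check that the directed colimit of homotopy equivalences along the cofibrations $\rips_{s_n}(X)'\hookrightarrow\rips_{s_{n+1}}(X)'$ really does receive a homotopy equivalence from $\rips_t(X)'$, which is exactly where the compactness of cells (and Whitehead's theorem) does the work. The first assertion is comparatively routine once the descending links in the relevant range are known to be contractible, since it is a direct application of the already-established Corollary~\ref{cor:morse}.
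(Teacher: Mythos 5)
Your proof is correct, and for the first assertion it coincides with the paper's argument step for step: Lemma~\ref{lem:morse_criterion} makes $(\diam,-\dim)$ a descending-type Morse function, Lemma~\ref{lem:cible_dlks} makes every descending link in the relevant range contractible, and Corollary~\ref{cor:morse} (applied for all $n$) plus the Whitehead Theorem upgrades the inclusion $\rips_t(X)\to\rips_s(X)$ to a homotopy equivalence. Where you genuinely diverge is the second assertion. The paper handles $(t,\infty)\subseteq I$ by running the identical Morse argument with $s=\infty$: the Morse Lemma and Corollary~\ref{cor:morse} apply verbatim to the inclusion $\rips_t(X)'\hookrightarrow\rips(X)'$, attaching all vertices $\sigma$ with $\diam(\sigma)>t$ at once, so $\rips_t(X)\to\rips(X)$ is a homotopy equivalence and contractibility of $\rips(X)$ finishes the proof. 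You instead exhaust $\rips(X)'$ by finite stages $\rips_{s_n}(X)'$ with $s_n\to\infty$ and pass to the colimit via the compactness of spheres and disks. Both routes are valid. Yours has the small virtue of invoking the Morse machinery only for finite parameters $t<s$, which is how the Morse Lemma is literally stated, at the cost of an extra (standard) direct-limit step; it is essentially the same maneuver the paper itself performs later in Corollary~\ref{cor:approx}, and your compactness argument is sound since any compact subset of $\rips(X)'$ lies in a finite subcomplex, whose finitely many vertices have bounded diameter. The paper's route is shorter because nothing in the proofs of the Morse Lemma or Corollary~\ref{cor:morse} actually requires $s$ to be finite: a descending chain of vertices in $Y_{t<h}$ still must terminate, since by the definition of a descending-type Morse function its $h$-values would otherwise drop below $t$.
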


\begin{proof}
By Lemma~\ref{lem:morse_criterion} $(\diam,-\dim)$ is a Morse function on $\rips(X)'$. Lemma~\ref{lem:cible_dlks} says that for any $\sigma$ with $\diam(\sigma)\in I$, $\dlk(\sigma)$ is contractible. Hence Corollary~\ref{cor:morse} says that for any $(t,s]\subseteq I$ the inclusion $\rips_t(X)\to \rips_s(X)$ induces an isomorphism in $\pi_k$ for all $k$. This inclusion is therefore a homotopy equivalence by the Whitehead Theorem. The same argument applies if $(t,\infty)\subseteq I$, with the conclusion being that $\rips_t(X)\to \rips(X)$ is a homotopy equivalence, so $\rips_t(X)$ is contractible.
\end{proof}

\begin{remark}\label{rmk:noncanonical}
The means by which the descending links in Lemma~\ref{lem:cible_dlks} are contractible is reminiscent of Forman's discrete Morse theory from Example~\ref{ex:forman}, in that sometimes it is due to a conically contractible descending face link and sometimes due to a conically contractible descending coface link. However, it does not seem easy to extract a ``Forman-style'' discrete Morse function giving the same sublevel sets, since the choice of $z$ in the Link Criterion is not necessarily canonical. In particular, in Forman's framework the descending coface link of a redundant simplex is a cone on a single codimension-$1$ coface (the collapsible simplex with which it is paired), and here descending coface links can easily be contractible for other reasons. Similarly, in Forman's framework the descending face link of a collapsible simplex is a cone on the vertex opposite a single codimension-$1$ coface (the redundant simplex with which is it paired), and here descending face links can be contractible for more complicated reasons.
\end{remark}

In the coming sections we will make use of some still stronger versions of the Strong Link Criterion, which we record here.

\begin{definition}[$r$-Pinched Strong Link Criterion]
Let $r\ge 0$. We say a metric space $X$ satisfies the \emph{$r$-Pinched Strong Link Criterion} in the range $I$ if for any $x,y\in X$ with $d(x,y)=t\in I$, there exists $z\in X$ such that $\overline{B}_t(x)\cap \overline{B}_t(y) \subseteq B_{t-r}(z)$.
\end{definition}

\begin{definition}[Arbitrarily Pinched Strong Link Criterion]
A metric space $X$ satisfies the \emph{Arbitrarily Pinched Strong Link Criterion} if for all $r$ there exists $N$ such that $X$ satisfies the $r$-Pinched Strong Link Criterion in the range $[N,\infty)$.
\end{definition}

\section{Examples}\label{sec:link_examples}

In this section we discuss examples of metric spaces $X$ for which the Link Criterion and its stronger forms hold. Our primary example will be asymptotically $\CAT(0)$ geodesic spaces, defined in Definition~\ref{def:asymp_cat0}. Let us ease into things with the easy example of euclidean space.

\begin{example}\label{ex:euc}
Let $X=\R^n$ with the euclidean metric. We claim $X$ satisfies the $r$-Pinched Strong Link Criterion in the range $I=(\frac{2r}{2-\sqrt{3}},\infty)$, so in particular satisfies the Arbitrarily Pinched Strong Link Criterion. Indeed, if $d(x,y)=t\in I$ and $z$ is the midpoint of the line segment from $x$ to $y$, then for any $w\in \overline{B}_t(x)\cap \overline{B}_t(y)$ we have $d(z,w)\le \frac{\sqrt{3}}{2}t$. Hence $w\in B_{t-s}(z)$ for any $s<\frac{2-\sqrt{3}}{2}t$. Now since $t>\frac{2r}{2-\sqrt{3}}$ we have $r<\frac{2-\sqrt{3}}{2}t$, so $\overline{B}_t(x)\cap \overline{B}_t(y)\subseteq B_{t-r}(z)$ as desired.
\end{example}

A more general example is $\CAT(0)$ spaces, or even asymptotically $\CAT(0)$ spaces introduced in \cite{kar11}. A metric space $X$ is \emph{asymptotically CAT(0)} if all of its asymptotic cones are $\CAT(0)$. A useful working definition in the case of geodesic metric spaces, equivalent by Theorem~8 of \cite{kar11}, involves inspecting comparison triangles. Given a geodesic triangle $\Delta$ in $X$, say with vertices $x_1$, $x_2$, and $x_3$, a \emph{comparison triangle} $\overline{\Delta}$ is a geodesic triangle in $\R^2$ with vertices $\overline{x_1}$, $\overline{x_2}$, and $\overline{x_3}$ such that $d(x_i,x_j)=d(\overline{x_i},\overline{x_j})$ for all $i,j\in\{1,2,3\}$. Given a point $p\in\Delta$, say on the geodesic from $x_1$ to $x_2$, the \emph{comparison point} is the point $\overline{p}\in\overline{\Delta}$ on the geodesic from $\overline{x}_1$ to $\overline{x}_2$ satisfying $d(\overline{x_1},\overline{p})=d(x_1,p)$. Also set $d(\Delta)\defeq \max\{d(x_i,x_j)\mid i,j\in\{1,2,3\}\}$. Now our working definition (which is phrased slightly differently than Theorem~8 of \cite{kar11} but is equivalent) is as follows.

\begin{definition}[Asymptotically $\CAT(0)$]\label{def:asymp_cat0}
A geodesic metric space $X$ is \emph{asymptotically $\CAT(0)$} if there exists $f\colon \R\to\R$ with $\lim_{t\to\infty}\frac{f(t)}{t}=0$ such that for any $\Delta$ and $\overline{\Delta}$ as above and for any points $p,q\in\Delta$ we have $d(p,q)\le d(\overline{p},\overline{q})+f(d(\Delta))$.
\end{definition}

\begin{lemma}\label{lem:asymp_cat0}
If a geodesic metric space $X$ is asymptotically $\CAT(0)$ then it satisfies the Arbitrarily Pinched Strong Link Criterion.
\end{lemma}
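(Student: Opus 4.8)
The plan is to mimic the euclidean computation of Example~\ref{ex:euc}, replacing exact euclidean distances by comparison-triangle distances and absorbing the discrepancy into the sublinear error function $f$ supplied by Definition~\ref{def:asymp_cat0}. Fix $r\ge 0$; the goal is to produce an $N$ so that the $r$-Pinched Strong Link Criterion holds in the range $[N,\infty)$. So I would fix $x,y\in X$ with $d(x,y)=t$, where $t$ is large (to be quantified at the end), and, using that $X$ is geodesic, let $z$ be the midpoint of a geodesic from $x$ to $y$, so $d(x,z)=d(y,z)=t/2$. This $z$ will be the required center.

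Next I would take an arbitrary $w\in\overline{B}_t(x)\cap\overline{B}_t(y)$ and consider the geodesic triangle $\Delta$ with vertices $x,y,w$. The point $z$ lies on the side from $x$ to $y$, so its comparison point $\overline{z}$ is the midpoint of the segment from $\overline{x}$ to $\overline{y}$ in a comparison triangle $\overline{\Delta}\subseteq\R^2$. Since $d(\overline{x},\overline{y})=t$ and $d(\overline{x},\overline{w})=d(x,w)\le t$, $d(\overline{y},\overline{w})=d(y,w)\le t$, the distance $d(\overline{z},\overline{w})$ is the length of a median, and the euclidean median formula gives $d(\overline{z},\overline{w})^2=\tfrac14\bigl(2d(\overline{x},\overline{w})^2+2d(\overline{y},\overline{w})^2-t^2\bigr)\le\tfrac34 t^2$, i.e. $d(\overline{z},\overline{w})\le\tfrac{\sqrt3}{2}t$, exactly the euclidean bound from Example~\ref{ex:euc}. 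The asymptotically $\CAT(0)$ inequality of Definition~\ref{def:asymp_cat0} then yields $d(z,w)\le d(\overline{z},\overline{w})+f(\diam(\Delta))\le\tfrac{\sqrt3}{2}t+f(\diam(\Delta))$.

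It then remains to show the error term is eventually negligible against the gap $\bigl(1-\tfrac{\sqrt3}{2}\bigr)t=\tfrac{2-\sqrt3}{2}t$. Because $x,y,w$ are pairwise within distance $t$, any two points of $\Delta$ are joined through a common vertex by a path of length at most $2t$, so $\diam(\Delta)\le 2t$ and hence $f(\diam(\Delta))\le f(\lceil 2t\rceil)$ after replacing $f$ by a nondecreasing majorant. Since $\lim_{t\to\infty}f(t)/t=0$ we get $f(\lceil 2t\rceil)/t\to 0$, so there is an $N$ with $f(\lceil 2t\rceil)<\tfrac{2-\sqrt3}{4}t$ and $r<\tfrac{2-\sqrt3}{4}t$ for all $t\ge N$. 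For such $t$ this gives $d(z,w)<t-r$, i.e. $w\in B_{t-r}(z)$. As $w$ was arbitrary this shows $\overline{B}_t(x)\cap\overline{B}_t(y)\subseteq B_{t-r}(z)$, which is the $r$-Pinched Strong Link Criterion at scale $t$; since this holds for all $t\ge N$ and $r\ge 0$ was arbitrary, $X$ satisfies the Arbitrarily Pinched Strong Link Criterion.

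The geometric heart of the argument is entirely contained in the euclidean median estimate, which is identical to Example~\ref{ex:euc}, so the main obstacle I anticipate is purely the bookkeeping around the error function: one must verify that $\diam(\Delta)$ is genuinely bounded linearly in $t$ (so that the sublinearity $f(t)/t\to 0$ can be invoked on the gap of size proportional to $t$), and be slightly careful that $f$ is a priori defined only on $\N$, which is why I pass to a nondecreasing majorant evaluated at $\lceil 2t\rceil$. Once those points are handled, the two competing quantities are manifestly of orders $t$ and $o(t)$, and the conclusion follows by taking $N$ large.
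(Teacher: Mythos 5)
Your proof is correct and follows essentially the same route as the paper's: take $z$ to be the midpoint of a geodesic from $x$ to $y$, bound $d(\overline{z},\overline{w})\le\tfrac{\sqrt{3}}{2}t$ in the comparison triangle, and absorb the error $f(\cdot)$ using sublinearity to choose $N$. If anything you are slightly more careful than the paper, which simply writes the error term as $f(t)$, whereas you justify the bound $\diam(\Delta)\le 2t$ and pass to a nondecreasing majorant of $f$ evaluated at $\lceil 2t\rceil$ --- a harmless refinement that does not change the argument.
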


\begin{proof}
For $r\ge 0$ choose $N$ large enough that $N>\frac{2r}{2-\sqrt{3}}$ and $\frac{f(t)}{t}<1-\frac{r}{t}-\frac{\sqrt{3}}{2}$ for all $t\ge N$. (The first assumption ensures that $0<1-\frac{r}{t}-\frac{\sqrt{3}}{2}$ for all $t\ge N$, making the second assumption possible.) Let $x,y\in X$ with $d(x,y)=t\ge N$. Choose a geodesic from $x$ to $y$ and let $z$ be its midpoint. Let $w\in \overline{B}_t(x)\cap \overline{B}_t(y)$. Let $\Delta$ be a geodesic triangle with vertices $x,y,w$ such that the geodesic whose midpoint is $z$ is the side of $\Delta$ from $x$ to $y$. Note that in the comparison triangle $\overline{\Delta}$ we have $d(\overline{z},\overline{w})\le t\frac{\sqrt{3}}{2}$, so we know $d(z,w)\le t\frac{\sqrt{3}}{2}+f(t)$. Our assumptions ensure that $f(t)<t(1-\frac{r}{t}-\frac{\sqrt{3}}{2})$, so $d(z,w)<t-r$ as desired.
\end{proof}

Since $\CAT(0)$ and hyperbolic geodesic spaces are asymptotically $\CAT(0)$, Lemma~\ref{lem:asymp_cat0} says they satisfy the Arbitrarily Pinched Strong Link Criterion.

\begin{remark}
The proof of Lemma~\ref{lem:asymp_cat0} would work with $\frac{\sqrt{3}}{2}$ replaced everywhere by any constant strictly less than $1$, so one could use a generalization of asymptotically $\CAT(0)$ that allows ``fatter'' comparison triangles (though not as fat as, e.g., the 90-90-90 triangles in $S^2$). We will revisit this in the proof of Proposition~\ref{prop:spheres}.
\end{remark}

Of course for $\mathfrak{X}$ a non-trivial geodesic metric space the Morse Criterion does not hold (see Remark~\ref{rmk:geodesic}), so this does not tell us anything directly about $\rips_t(\mathfrak{X})$ for such $\mathfrak{X}$. In practice though, it is often possible to pass from a space $\mathfrak{X}$ satisfying the $r$-Pinched Strong Link Criterion, in some range, to a subspace $X$, with the induced metric, satisfying both the Morse Criterion and the Strong Link Criterion in the same range. The key is to make sure that $X$ is \emph{$r$-dense} in $\mathfrak{X}$, meaning for all $x\in \mathfrak{X}$ there exists $x'\in X$ with $d(x,x')\le r$.

\begin{lemma}\label{lem:lattice}
Let $\mathfrak{X}$ be a metric space satisfying the $r$-Pinched Strong Link Criterion in the range $I$ for some $r$. Then any $r$-dense subspace $X$ of $\mathfrak{X}$ with the induced metric satisfies the Strong Link Criterion in the range $I$.
\end{lemma}

\begin{proof}
Let $x,y\in X$ with $d(x,y)=t\in I$. Choose $z\in \mathfrak{X}$ such that $\overline{B}_t(x)\cap \overline{B}_t(y) \subseteq B_{t-r}(z)$ (here balls are taken in $\mathfrak{X}$). Choose $z'\in X$ with $d(z,z')\le r$. Now for any $w\in X\cap \overline{B}_t(x)\cap \overline{B}_t(y)$ we have $d(w,z') < (t-r)+r=t$.
\end{proof}

\begin{proposition}\label{prop:lattice_rips}
Let $\mathfrak{X}$ be a metric space satisfying the $r$-Pinched Strong Link Criterion in the range $I$ for some $r$. Let $X$ be an $r$-dense subspace of $\mathfrak{X}$ with the induced metric that satisfies the Morse Criterion. Then for any $(t,s]\subseteq I$ the inclusion $\rips_t(X)\to\rips_s(X)$ is a homotopy equivalence, and for any $(t,\infty)\subseteq I$, $\rips_t(X)$ is contractible.
\end{proposition}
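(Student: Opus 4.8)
The plan is to chain together three results already established, since the hypotheses of this proposition have been arranged precisely so that it becomes a formal corollary of Theorem~\ref{thrm:contractible}. First I would invoke Corollary~\ref{cor:lattice}: because $\mathfrak{X}$ satisfies the $r$-Pinched Strong Link Criterion in the range $I$ and $X$ is $r$-dense in $\mathfrak{X}$, the subspace $X$ with its induced metric satisfies the Strong Link Criterion in the range $I$. Next I would downgrade this to the ordinary Link Criterion via Observation~\ref{obs:link_condition}, which says that the Strong Link Criterion in a given range implies the Link Criterion in that same range. Hence $X$ satisfies the Link Criterion in the range $I$.

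At this point $X$ satisfies both hypotheses of Theorem~\ref{thrm:contractible}: the Morse Criterion, which is assumed outright, and the Link Criterion in the range $I$, just established. Applying that theorem directly yields both desired conclusions, namely that $\rips_t(X)\to\rips_s(X)$ is a homotopy equivalence for every $(t,s]\subseteq I$ and that $\rips_t(X)$ is contractible for every $(t,\infty)\subseteq I$.

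I expect essentially no obstacle here, since all of the genuine content has already been absorbed into the cited results: the Morse Lemma machinery and the contractibility of descending links live inside Theorem~\ref{thrm:contractible}, while the transfer of the metric condition from $\mathfrak{X}$ to the $r$-dense subset $X$ is exactly the work done in Corollary~\ref{cor:lattice}. The only point I would take care to verify is that the range $I$ is preserved without any shrinkage at each step. This is indeed the case: Corollary~\ref{cor:lattice} keeps the identical range $I$ (the pinching slack $r$ on $\mathfrak{X}$ is precisely what gets consumed by the $r$-density when replacing the witness $z\in\mathfrak{X}$ by a nearby $z'\in X$, so one recovers the plain Strong Link Criterion rather than a pinched one), and both Observation~\ref{obs:link_condition} and Theorem~\ref{thrm:contractible} operate range-for-range. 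Thus no interval bookkeeping intervenes, and the composition goes through cleanly.
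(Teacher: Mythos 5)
Your proof is correct and follows exactly the paper's own route: Corollary~\ref{cor:lattice} transfers the pinched condition on $\mathfrak{X}$ to the Strong Link Criterion on the $r$-dense subspace $X$, and then Theorem~\ref{thrm:contractible} finishes the argument. The only difference is cosmetic---you explicitly cite Observation~\ref{obs:link_condition} to pass from the Strong Link Criterion to the Link Criterion, a step the paper leaves implicit.
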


\begin{proof}
We are assuming $X$ satisfies the Morse Criterion, and Lemma~\ref{lem:lattice} says $X$ satisfies the Strong Link Criterion in the range $I$, so the result follows from Theorem~\ref{thrm:contractible}.
\end{proof}

\begin{example}\label{ex:lattice}
Let $X$ be a subspace of $\R^n$ with the induced euclidean metric. Assume $X$ is proper and the set of diameters of finite subsets of $X$ is closed and discrete in $\R$, so Observation~\ref{obs:discrete} says $X$ satisfies the Morse Criterion, and further assume that $X$ is $r$-dense in $\R^n$ for some $r$. (A quintessential example is $X=\Z^n$.) From Example~\ref{ex:euc}, $\mathfrak{X}=\R^n$ with the euclidean metric satisfies the $r$-Pinched Strong Link Criterion in the range $(\frac{2r}{2-\sqrt{3}},\infty)$, so Proposition~\ref{prop:lattice_rips} says the Vietoris--Rips complex $\rips_t(X)$ is contractible for all $t>\frac{2r}{2-\sqrt{3}}$. For example $\rips_t(\Z^n)$ is contractible for all $t>\frac{\sqrt{n}}{2-\sqrt{3}}$.
\end{example}

This example is interesting since it is generally difficult to compute homotopy types of Vietoris--Rips complexes of subsets of $\R^n$, even of $\R^2$. For more in this vein see \cite[Problem~7.3]{adamaszek17}, \cite{adamaszek17circle}, and \cite{chambers10}. More generally Example~\ref{ex:lattice} works for any analogous subspace of an asymptotically $\CAT(0)$ space with the induced metric, for instance lattices in non-positively curved spaces.

It is also possible to leverage Proposition~\ref{prop:lattice_rips} to say something about the $\rips_t(\mathfrak{X})$:

\begin{corollary}\label{cor:approx}
Let $\mathfrak{X}$ be a metric space satisfying the $r$-Pinched Strong Link Criterion in the range $I$ for some $r$. Suppose there exists a subspace $X$ of $\mathfrak{X}$ that is $r$-dense and satisfies the Morse Criterion. Then for any $(t,s]\subseteq I$, the inclusion $\rips_t(\mathfrak{X})\to\rips_s(\mathfrak{X})$ is a homotopy equivalence, and for any $(t,\infty)\subseteq I$, $\rips_t(\mathfrak{X})$ is contractible.
\end{corollary}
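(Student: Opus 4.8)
The plan is to reduce the statement to Proposition~\ref{prop:lattice_rips} by writing $\rips_t(\mathfrak{X})$ as a directed union of Vietoris--Rips complexes of subspaces that already satisfy the hypotheses of that proposition, and then passing to the limit on homotopy groups. For a finite subset $S\subseteq\mathfrak{X}\setminus X$ set $X_S\defeq X\cup S$, and direct these by inclusion. Every simplex of $\rips_t(\mathfrak{X})$ is a finite set $F\subseteq\mathfrak{X}$ of diameter at most $t$, so it already lies in $\rips_t(X_S)$ for $S=F\setminus X$; hence $\rips_t(\mathfrak{X})=\bigcup_S \rips_t(X_S)$ is an increasing union of full subcomplexes. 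Each $X_S$ contains $X$ and is therefore still $r$-dense in $\mathfrak{X}$, and $\mathfrak{X}$ satisfies the $r$-Pinched Strong Link Criterion in the range $I$ by hypothesis. So as soon as we know that each $X_S$ satisfies the Morse Criterion, Proposition~\ref{prop:lattice_rips} applies to $X_S$: for $(t,s]\subseteq I$ the inclusion $\rips_t(X_S)\to\rips_s(X_S)$ is a homotopy equivalence, and for $(t,\infty)\subseteq I$ the complex $\rips_t(X_S)$ is contractible.

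Granting this, both conclusions follow formally. Since any compact subset of a CW complex (in particular the image of a sphere or of a sphere times an interval) meets only finitely many cells and hence lies in a finite subcomplex, we have $\pi_k(\rips_t(\mathfrak{X}))=\varinjlim_S \pi_k(\rips_t(X_S))$ for every $k$, and likewise for $s$. In the range $(t,\infty)\subseteq I$ every $\rips_t(X_S)$ is contractible, so $\rips_t(\mathfrak{X})$ has trivial homotopy groups and is therefore contractible by the Whitehead Theorem. In the range $(t,s]\subseteq I$ the inclusion $\rips_t(\mathfrak{X})\to\rips_s(\mathfrak{X})$ is the direct limit of the maps $\rips_t(X_S)\to\rips_s(X_S)$, each of which induces isomorphisms on all homotopy groups; a directed colimit of isomorphisms is an isomorphism, so the inclusion induces isomorphisms on all homotopy groups and is a homotopy equivalence, again by Whitehead.

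The real work, and the step I expect to be the main obstacle, is checking that $X_S$ satisfies the Morse Criterion for every finite $S$. Finiteness of balls is easy: a ball of $X_S$ meets $S$ in a finite set, and its intersection with $X$ is, after using $r$-density to replace a center lying in $S$ by a nearby center in $X$, contained in a ball of $X$ and hence finite by the Morse Criterion for $X$. The alternating-chain condition is subtler, and the key point is that $S$ can disturb only finitely many ``levels''. Concretely, for each $s\in S$ finiteness of balls makes $\{d(s,y):y\in X_S\}$ locally finite in $\R$, so the set $D_S$ of distances realized by a pair with at least one endpoint in $S$ is locally finite. Given a putative infinite alternating chain $\sigma_1<\sigma_2>\sigma_3<\cdots$ in $\rips(X_S)$, its diameters $\diam(\sigma_1)=\diam(\sigma_2)>\diam(\sigma_3)=\diam(\sigma_4)>\cdots$ form a strictly decreasing sequence, only finitely many of whose values can lie in the locally finite set $D_S$; hence for all large indices the diameter is realized by a pair of points of $X$, forcing $\diam(\sigma_i\cap X)=\diam(\sigma_i)$.

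Then I would extract from the sets $\sigma_i\cap X$ an infinite alternating chain in $\rips(X)$ exhibiting the same forbidden diameter pattern, contradicting the Morse Criterion for $X$. Here one must discard the steps in which the chain only adds or deletes points of $S$: the diameter-decreasing (downward) steps remain strict because the diameters genuinely drop, while an upward step can degenerate into an equality of $X$-parts, and one argues that such collapses cannot occur cofinally (otherwise the $X$-parts would form an infinite strictly descending chain of finite sets). Carrying out this containment-and-diameter bookkeeping cleanly is the one genuinely fiddly part of the argument; everything else is formal.
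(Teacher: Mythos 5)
Your proposal follows the same overall route as the paper's proof: write $\rips_t(\mathfrak{X})$ as the directed union of the complexes $\rips_t(X_S)$ over finite $S$, observe that each $X_S$ is still $r$-dense and still satisfies the Morse Criterion, apply Proposition~\ref{prop:lattice_rips} level-wise, and pass to the limit. You differ in the two steps the paper treats lightly. For the limit step, the paper invokes (following Proposition~7.1 of \cite{adamaszek17circle}) the fact that the inclusions $\rips_t(X_S)\to\rips_t(X_{S'})$ for $S\subseteq S'$ are closed cofibrations, so a level-wise system of homotopy equivalences induces a homotopy equivalence of colimits; you instead use compactness to identify $\pi_k$ of the union with $\varinjlim_S\pi_k(\rips_t(X_S))$ and finish with Whitehead. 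Both are valid; yours is more elementary and self-contained, while the paper's gives the homotopy equivalence directly without basepoint or Whitehead bookkeeping. Conversely, the claim that $X_S$ satisfies the Morse Criterion, which the paper simply asserts without proof, is where you invest the real effort, and your locally-finite-distances argument showing that for all sufficiently large indices the diameter of $\sigma_i$ is realized by a pair of points of $X$ is exactly the right key step.

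One claim in that last part is wrong as stated, though it is fixable with the tools you already have. You assert that collapses of upward steps ``cannot occur cofinally, otherwise the $X$-parts would form an infinite strictly descending chain of finite sets.'' That justification only rules out collapses occurring at \emph{all but finitely many} levels; collapses can perfectly well occur at infinitely many levels, provided proper upward steps also occur infinitely often, and then no infinite descending chain forms, so a plan that passes to a collapse-free tail cannot work. The correct completion is a dichotomy. Either all but finitely many upward steps collapse on $X$-parts, in which case merging them with the strict downward steps yields an infinite strictly descending chain of finite sets, which is impossible; or there are infinitely many levels $j_1<j_2<\cdots$ at which the upward step is proper on $X$-parts, i.e., $\tau_{2j_k-1}\subsetneq\tau_{2j_k}$, where $\tau_i\defeq\sigma_i\cap X$. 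In the latter case, restrict to the tail where your locally-finite-distances argument gives $\diam(\tau_i)=\diam(\sigma_i)$. Between consecutive levels $j_k$ and $j_{k+1}$ every upward step is an equality of $X$-parts and every downward step is a proper inclusion (its diameter strictly drops), so composing them gives $\tau_{2j_k}\supsetneq\tau_{2j_{k+1}-1}$ with strictly smaller diameter. Hence $\tau_{2j_1-1}<\tau_{2j_1}>\tau_{2j_2-1}<\tau_{2j_2}>\cdots$ is precisely an alternating chain in $\rips(X)$ of the kind forbidden by the Morse Criterion for $X$, a contradiction. With this repair your argument is complete, and it supplies a detail the paper leaves to the reader.
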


\begin{proof}
First note that for any finite $F\subseteq \mathfrak{X}$, the union $X\cup F$ is still $r$-dense and still satisfies the Morse Criterion. By Proposition~\ref{prop:lattice_rips}, $\rips_t(X\cup F)\to\rips_s(X\cup F)$ is a homotopy equivalence for all finite $F$ and all $t<s$ with $(t,s]\subseteq I$. Now we proceed similarly to the proof of Proposition~7.1 in \cite{adamaszek17circle}: clearly $\rips_t(\mathfrak{X})$ is the colimit of the $\rips_t(X\cup F)$, with $F$ ranging over all finite subsets of $X$, and the level-wise maps $\rips_t(X\cup F)\to\rips_s(X\cup F)$ are homotopy equivalences, so the induced map $\rips_t(\mathfrak{X})\to\rips_s(\mathfrak{X})$ is a homotopy equivalence. (The key reason this works, as explained in \cite{adamaszek17circle}, is that the maps $\rips_t(X\cup F)\to\rips_t(X\cup F')$ for $F\subseteq F'$ are inclusions of closed subcomplexes, hence are cofibrations.) The $(t,\infty)\subseteq I$ case follows similarly, using the fact that $\rips(\mathfrak{X})$ is contractible.
\end{proof}

\begin{example}[Vietoris--Rips complexes of $\CAT(0)$ spaces]
Corollary~\ref{cor:approx} implies that $\rips_t(\R^n)$ is contractible for all $t>0$, since for any $r>0$, $\R^n$ admits $r$-dense subspaces satisfying the Morse Criterion (just take appropriately scaled copies of $\Z^n$). More generally, thanks to Lemma~\ref{lem:asymp_cat0}, it is not hard to show that any (proper) asymptotically $\CAT(0)$ space $\mathfrak{X}$ admitting a properly discontinuous cocompact action by a group $G$ of isometries will have contractible $\rips_t(\mathfrak{X})$ for $t$ large enough: just take a (compact) closed ball in $\mathfrak{X}$ whose $G$-translates cover $\mathfrak{X}$, choose a finite $r$-dense subset of the ball for arbitrarily small $r$, let $X$ be the set of translates of this finite set, and apply Corollary~\ref{cor:approx}.
\end{example}

Finally we discuss an illustrative example where the Link Criterion does not hold in any range of the form $[N,\infty)$.

\begin{example}
For $C_n$ the $n$-cycle graph set $\Gamma=C_4\vee C_6\vee C_8\vee\cdots$, and let $X=\Gamma^{(0)}$ with the induced path metric. Let $N\ge 2$ and let $F$ be the set of vertices of $C_{2N}$, so $\diam(F)=N$. Then $\bigcap_{f\in F}\overline{B}_N(f)=F$, so for $z\in X$ we have $\bigcap_{f\in F}\overline{B}_N(f)\subseteq \overline{B}_N(z)$ if and only if $z\in F$. This rules out the $z\in X\setminus F$ case of the Link Criterion, and the $z\in F$ case also does not hold since every $z\in F$ is distance $N$ from its antipode in $C_{2N}$.
\end{example}

This example reflects the intuition that one impediment to the Link Criterion holding in ranges of the form $[N,\infty)$ is if $X$ has ``arbitrarily large empty spheres.''

\section{Application: Topological data analysis}\label{sec:tda}

Finite metric spaces, typically living in $\R^n$, are one of the main topics of interest in topological data analysis. One can view the finite set as a point cloud, or set of data points, and try to use topological methods to understand the distribution of data or recover some object from which the data was sampled. A common tool in this field is Forman's discrete Morse theory (Example~\ref{ex:forman}), see for instance \cite{mischaikow13,harker14}. When using discrete Morse theory to analyze Vietoris--Rips complexes of a point cloud $X$, one would hope to find a Morse function with respect to which ``most'' descending links are contractible. This reduces noise and ensures that important topological features of $\rips_t(X)$ persist for large ranges of $t$ values. The most obvious choice of Morse function on $\rips(X)'$ is $\dim$, but this is as bad as possible, since none of the descending links are contractible.

A more clever choice of discrete Morse function on $\rips(X)'$ is $(\diam,-\dim)$, and it really is a Morse function, since any finite metric space satisfies the Morse Criterion. Now if there is a large range of $t$ values such that every simplex of $\rips(X)$ with $\diam$ value in that range has contractible descending link (e.g., by virtue of its vertex set satisfying the Link Criterion), then all the $\rips_t(X)$ for $t$ in that range are homotopy equivalent, indicating some persistent behavior. Intuitively a simplex $\sigma$ with vertex set $F$ will satisfy the Link Criterion, and hence have contractible descending link, if either some point of $F$ is strictly closer than $\diam(F)$ to every point in $F$, or if some point of $X$ outside $F$ can be added to $F$ without changing the set of points of $X$ within $\diam(\sigma)$ of each point in $F$. Such $\sigma$ are, roughly, those not ``surrounding an empty sphere.'' See Figure~\ref{fig:point_cloud} for an example.

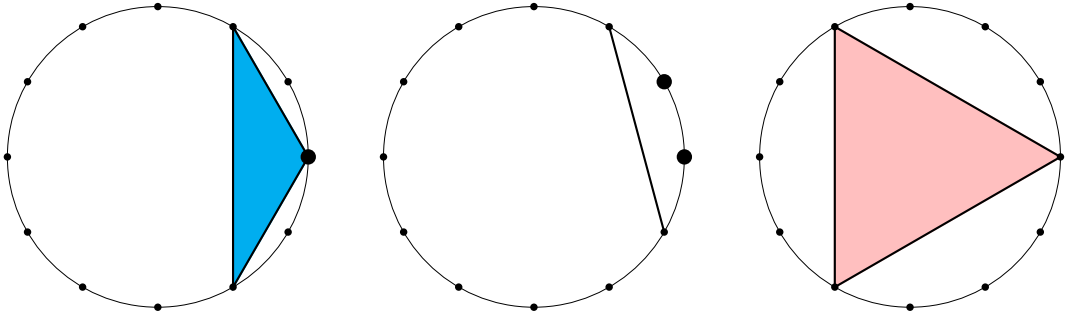
\begin{figure}[htb]
 \begin{tikzpicture}[line width=0.8pt]\centering
  \draw[line width=0.2pt] (0,0) circle (2cm);
	\filldraw[gray] (60:2) -- (0:2) -- (300:2) -- (60:2);
	\draw (60:2) -- (0:2) -- (300:2) -- (60:2);
  \filldraw (0:2) circle (2.5pt);
  \filldraw (30:2) circle (1pt);
  \filldraw (60:2) circle (1pt);
  \filldraw (90:2) circle (1pt);
  \filldraw (120:2) circle (1pt);
  \filldraw (150:2) circle (1pt);
  \filldraw (180:2) circle (1pt);
  \filldraw (210:2) circle (1pt);
  \filldraw (240:2) circle (1pt);
  \filldraw (270:2) circle (1pt);
  \filldraw (300:2) circle (1pt);
  \filldraw (330:2) circle (1pt);
	
	\begin{scope}[xshift=5cm]	
  \draw[line width=0.2pt] (0,0) circle (2cm);
	\draw (60:2) -- (330:2);
  \filldraw (0:2) circle (2.5pt);
  \filldraw (30:2) circle (2.5pt);
  \filldraw (60:2) circle (1pt);
  \filldraw (90:2) circle (1pt);
  \filldraw (120:2) circle (1pt);
  \filldraw (150:2) circle (1pt);
  \filldraw (180:2) circle (1pt);
  \filldraw (210:2) circle (1pt);
  \filldraw (240:2) circle (1pt);
  \filldraw (270:2) circle (1pt);
  \filldraw (300:2) circle (1pt);
  \filldraw (330:2) circle (1pt);
	\end{scope}
	
	\begin{scope}[xshift=10cm]	
  \draw[line width=0.2pt] (0,0) circle (2cm);
	\filldraw[lightgray] (0:2) -- (120:2) -- (240:2) -- (0:2);
	\draw (0:2) -- (120:2) -- (240:2) -- (0:2);
  \filldraw (0:2) circle (1pt);
  \filldraw (30:2) circle (1pt);
  \filldraw (60:2) circle (1pt);
  \filldraw (90:2) circle (1pt);
  \filldraw (120:2) circle (1pt);
  \filldraw (150:2) circle (1pt);
  \filldraw (180:2) circle (1pt);
  \filldraw (210:2) circle (1pt);
  \filldraw (240:2) circle (1pt);
  \filldraw (270:2) circle (1pt);
  \filldraw (300:2) circle (1pt);
  \filldraw (330:2) circle (1pt);
	\end{scope}
 \end{tikzpicture}
 \caption{Some simplices in $\rips(X)$, for $X$ the indicated twelve points on the circle, with the induced arclength metric. The first simplex (dark gray) has a vertex marked in bold closer to its other vertices than the diameter of the simplex, hence satisfies the Link Criterion. The second simplex (edge) satisfies the Link Criterion for the other reason, using either of the vertices in bold. The third simplex (light gray) does not satisfy the Link Criterion.}\label{fig:point_cloud}
\end{figure}

As explained in Remark~\ref{rmk:forman_to_bb}, any application of Forman's discrete Morse theory to topological data analysis can be recast in terms of Bestvina--Brady discrete Morse theory. We suspect that the Bestvina--Brady approach could lead to more efficient algorithms, since as explained in Remark~\ref{rmk:noncanonical} it is more general: descending links could turn out to be contractible via more complicated means than just having a single descending coface (for redundant simplices) or a single non-descending face (for collapsible simplices). Also, using Bestvina--Brady-style discrete Morse functions other than ones of the form $(h,-\dim)$ (i.e., ones coming from a Forman-style discrete Morse function $h$) for applications in topological data analysis seems to be a completely untapped avenue.

\medskip

For the rest of this section we focus on a problem related to topological data analysis, involving computing homotopy types of Vietoris--Rips complexes of spheres $S^n$. In \cite{adamaszek17circle}, Adamaszek and Adams compute the homotopy type of $\rips_t(S^1)$ for all $t$. To illustrate an application of the Morse function $(\diam,-\dim)$, in this next example we use it to recover the fact that all the $\rips_t(S^1)$ for $0<t<1/3$ are homotopy equivalent to each other (in fact to $S^1$).

\begin{example}\label{ex:circle}
Consider the arc length metric on $S^1$, normalized so the circumference is $1$. Note that for any $x,y\in S^1$ with $d(x,y)=t<1/3$, the intersection $\overline{B}_t(x)\cap\overline{B}_t(y)$ equals the geodesic from $x$ to $y$. In particular if $z$ is the midpoint of this geodesic then this intersection lies in $B_{t-r}(z)$ for any $0<r<t/2$. Hence for any $0<r<t/2$, $S^1$ satisfies the $r$-Pinched Strong Link Criterion in the range $(2r,1/3)$. Since $S^1$ certainly contains $r$-dense finite subsets for arbitrary $r$ (and since finite sets satisfy the Morse Criterion), Corollary~\ref{cor:approx} says that for any $t<s$ with $2r<t<s<1/3$, the inclusion $\rips_t(S^1)\to\rips_s(S^1)$ is a homotopy equivalence. Varying $r$ tells us this is a homotopy equivalence for all $0<t<s<1/3$. Latschev's Theorem \cite{latschev01} says $\rips_t(S^1)\simeq S^1$ for $0<t$ small enough, so we conclude $\rips_t(S^1)\simeq S^1$ for all $0<t<1/3$.
\end{example}

As a remark, for $t\ge 1/3$ it seems the Morse function $(\diam,-\dim)$ cannot be used to fully recover the computation of $\rips_t(S^1)$. In particular, the order dictated by $(\diam,-\dim)$ in which the simplices of diameter $1/3$ are glued in is ``wrong'', and the homotopy type of $\rips_{1/3}(S^1)$ cannot be obtained this way without ``extra knowledge'' about how the simplices fit together beyond the local information coming from the descending links. It seems likely though that a more intricate Morse function could be used to fully recover all the $\rips_t(S^1)$, but we will leave this for the future.

For $S^n$ with $n\ge 2$ the homotopy types of the $\rips_t(S^n)$ for arbitrary $t$ are currently unknown (here we use the arclength metric on $S^n$, scaled so that antipodal points are at distance $1/2$). In \cite{hausmann95} Hausmann proves that $\rips_t(S^n)\simeq S^n$ for $0<t<1/4$ (note that in \cite{hausmann95} the definition of Vietoris--Rips complex uses ``$<$'' instead of ``$\le$'' so in his language the range is $0<t\le 1/4$). The proof in \cite{hausmann95} is extremely technical and involves choosing a total ordering of the points of the sphere. We can recover this result now using a much simpler and ``local'' Morse theoretic argument:

\begin{proposition}\label{prop:spheres}
For any $0<t<1/4$ we have $\rips_t(S^n)\simeq S^n$.
\end{proposition}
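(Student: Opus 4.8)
The plan is to mimic the strategy used for $S^1$ in Example~\ref{ex:circle}, applying Corollary~\ref{cor:approx} to a version of the Strong Link Criterion on $S^n$, combined with Latschev's Theorem to pin down the homotopy type at small scales. First I would verify that $S^n$ with the scaled arclength metric satisfies a suitable pinched form of the Strong Link Criterion in an appropriate range. Concretely, I would fix $x,y\in S^n$ with $d(x,y)=t<1/4$, let $z$ be the geodesic midpoint of the minimizing geodesic from $x$ to $y$, and estimate the diameter of $\overline{B}_t(x)\cap\overline{B}_t(y)$ relative to $z$. The key geometric input is that any $w$ with $d(x,w)\le t$ and $d(y,w)\le t$ must satisfy $d(z,w)\le c\,t$ for some constant $c<1$ governed by spherical trigonometry, so that $w\in B_{t-r}(z)$ once $r$ is small enough relative to $t$. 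This is exactly the euclidean computation from Example~\ref{ex:euc} carried out on the sphere, and the constant $\frac{\sqrt{3}}{2}$ there will be replaced by the appropriate spherical analogue.

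Granting such a bound, I would conclude that for each $r>0$ small enough there is a corresponding lower endpoint so that $S^n$ satisfies the $r$-Pinched Strong Link Criterion in a range of the form $(2r,1/4)$ (or some constant multiple of $r$ up to $1/4$). Since $S^n$ admits finite $r$-dense subsets for every $r$, and finite metric spaces automatically satisfy the Morse Criterion, Corollary~\ref{cor:approx} then gives that the inclusion $\rips_t(S^n)\to\rips_s(S^n)$ is a homotopy equivalence for all $0<t<s<1/4$, upon letting $r\to 0$. Finally, Latschev's Theorem \cite{latschev01} supplies $\rips_t(S^n)\simeq S^n$ for sufficiently small $t>0$, and propagating this along the homotopy equivalences yields $\rips_t(S^n)\simeq S^n$ for all $0<t<1/4$, exactly as in the circle case.

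The main obstacle is the spherical distance estimate: one must show that the intersection of two radius-$t$ balls centered at points distance $t$ apart has diameter bounded by a constant strictly less than $1$ times $t$, uniformly in $t<1/4$, and that the midpoint $z$ witnesses this contraction. The threshold $1/4$ is essential here, since at and beyond antipodal-scale distances the comparison triangles become too ``fat'' (the remark after Lemma~\ref{lem:asymp_cat0} warns precisely about the $90$-$90$-$90$ triangles on $S^2$), so the contraction constant would degenerate to $1$ and the argument would break down. I expect that a careful law-of-cosines computation on the sphere, checking that the worst-case $w$ still lies within $c\,t$ of $z$ with $c<1$ throughout $t<1/4$, is the technical heart of the proof; once that constant is in hand, the Morse-theoretic machinery and the colimit argument of Corollary~\ref{cor:approx} finish things off mechanically. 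It is worth noting that unlike the genuinely asymptotically $\CAT(0)$ examples, here the relevant range is bounded above by $1/4$ rather than extending to $\infty$, reflecting that $S^n$ is bounded and that the contraction estimate only holds below the antipodal threshold.
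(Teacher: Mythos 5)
Your overall strategy is exactly the paper's: witness a pinched Strong Link Criterion with the geodesic midpoint via spherical trigonometry, feed finite $r$-dense subsets (which satisfy the Morse Criterion) into Corollary~\ref{cor:approx}, vary $r$, and finish with Latschev's Theorem. However, one step, as you state it, is false: there is \emph{no} constant $c<1$ such that every $w\in\overline{B}_t(x)\cap\overline{B}_t(y)$ with $d(x,y)=t$ satisfies $d(z,w)\le c\,t$ uniformly in $t<1/4$, and consequently $S^n$ ($n\ge 2$) does not satisfy the $r$-Pinched Strong Link Criterion in any range of the form $(2r,1/4)$ for a fixed $r>0$. To see this, take $x,y$ on an equator of a great $2$-sphere at distance $t$, and let $w$ be a point equidistant from $x$ and $y$ with $d(w,x)=d(w,y)=t$; the spherical Pythagorean theorem gives $\cos(2\pi\,d(z,w))=\cos(2\pi t)/\cos(\pi t)$, so $d(z,w)/t\to 1$ as $t\to 1/4$. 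This is the quantitative shadow of the failure at $t=1/4$ noted in the paper (the two poles lie in the intersection at distance $1/2=2t$). Worse, the two such points $w_1,w_2$ on opposite sides of the equator satisfy $d(w_1,w_2)=2d(z,w_1)\to 1/2$, so for $s$ close to $1/4$ the intersection $\overline{B}_s(x)\cap\overline{B}_s(y)$ has diameter exceeding $2(s-r)$ and hence fits inside no ball of radius $s-r$, whatever center is chosen: the pinched criterion genuinely fails near the top of your claimed range. The extrapolation from Example~\ref{ex:circle} is what misleads here: on $S^1$ below scale $1/3$ the intersection of the two balls is just the geodesic from $x$ to $y$, giving the uniform constant $c=1/2$ and a range with a fixed upper endpoint, whereas on $S^n$ with $n\ge 2$ the intersection is a lens whose size relative to $t$ degenerates as $t\to 1/4$.

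The repair costs only a reordering of quantifiers, and is exactly what the paper does: fix the target scale $t<1/4$ \emph{first}, and use the spherical law of cosines to produce $q=q(t)<1$ such that every geodesic triangle in $S^n$ of diameter $s<t$ has all medians of length less than $qs$. Then for each $r>0$ the sphere satisfies the $r$-Pinched Strong Link Criterion in the range $(r/(1-q),t]$, since $r/(1-q)<s$ forces $qs<s-r$. Corollary~\ref{cor:approx}, applied with finite $r$-dense subsets as you propose, gives that $\rips_s(S^n)\to\rips_t(S^n)$ is a homotopy equivalence for $r/(1-q)<s<t$; letting $r\to 0$ handles all $0<s<t$, and then letting $t$ range over $(0,1/4)$ gives homotopy equivalences for all $0<s<t<1/4$. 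The rest of your outline, including the appeal to Latschev's Theorem at small scales, then goes through verbatim.
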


\begin{proof}
Since $t<1/4$, there exists $0<q<1$ such that given any geodesic triangle in $S^n$ with diameter $s<t$, the length of any median of this triangle is less than $qs$ (this follows from the spherical law of cosines). For any $r\ge 0$, if $r/(1-q) < s<t$ then in any geodesic triangle with diameter $s$ the length of any median is less than $s-r$, since $r/(1-q) < s$ implies $qs<s-r$. Now let $x,y\in S^n$ with $d(x,y)=s\in (r/(1-q),t]$, and let $z$ be the midpoint of the geodesic from $x$ to $y$. By the above, for any $w\in S^n$ with $d(w,x),d(w,y)\le s$ we have $d(w,z)< s-r$. In particular $S^n$ satisfies the $r$-Pinched Strong Link Criterion in the range $(r/(1-q),t]$. Since $S^n$ contains $r$-dense finite subsets for arbitrary $r$ (and since finite sets satisfy the Morse Criterion), Corollary~\ref{cor:approx} says that for any $s<t$ with $r/(1-q)<s<t<1/4$, the inclusion $\rips_s(S^n)\to\rips_t(S^n)$ is a homotopy equivalence. Varying $r$ tells us this is a homotopy equivalence for all $0<s<t<1/4$. By Latschev's Theorem \cite{latschev01}, $\rips_t(S^n)\simeq S^n$ for $0<t$ small enough, so we conclude $\rips_t(S^n)\simeq S^n$ for all $0<t<1/4$.
\end{proof}

As a remark, we do not think the $1/4$ is optimal; the persistent homotopy type should continue past $t=1/4$ up to $t=r_n$, where $r_n$ is the diameter of an inscribed regular $(n+1)$-simplex in $S^n$. However, the Link Criterion fails for $t=1/4$: in $S^2$ if $x$ and $y$ lie on the equator at distance $1/4$, then the north and south pole are both in $\overline{B}_{1/4}(x)\cap\overline{B}_{1/4}(y)$, but the poles cannot lie in $B_{1/4}(z)$ for any $z$ since they are at distance $1/2$ from each other. It would be interesting to see whether the Morse function $(\diam,-\dim)$ or a variation could be useful for computing $\rips_t(S^n)$ for $1/4 \le t<1/2$. We should also point out that Adamaszek--Adams--Frick obtained the result up to $r_n$ for the so called metric thickening \cite{adamaszek18}, and probably their methods could have also been used to obtain this for the usual Vietoris--Rips complexes.

\begin{remark}
For future applications of these sorts of discrete Morse theoretic tools to topological data analysis, we reiterate that $(\diam,-\dim)$ is just one choice of Morse function, and other Morse functions could be better for different applications. For a given problem, if $(\diam,-\dim)$ does not yield nice enough descending links, perhaps adding some clever third term $h$ does, e.g., $(h,\diam,-\dim)$, $(h,\diam,\dim)$, $(\diam,h,-\dim)$, or $(\diam,h,\dim)$. It is also possible that in some situations $(\diam,-\dim)$ does in fact work, but the descending links are contractible for more complicated reasons than the Link Criterion. In any case it would be interesting to find a Morse theoretic proof of the full computation of the $\rips_t(S^1)$ from \cite{adamaszek17circle}, since this could be a step toward computing the $\rips_t(S^n)$.
\end{remark}

\section{Application: Geometric group theory}\label{sec:groups}

Geometric group theory is often defined to be the study of groups as metric spaces. In this section we use our results to deduce various properties of groups, in particular related to topological finiteness properties. Recall that a cellular action of a group on a CW complex $X$ is \emph{geometric} if it is proper (meaning cell stabilizers are finite) and cocompact (meaning the orbit space is compact), and an action of a group by isometries on a metric space $X$ is \emph{geometric} if it is properly discontinuous (meaning only finitely many elements move each compact subset to a non-disjoint subset) and cocompact.

Recall from the introduction that a group is of type $\F_n$ if it admits a geometric action on an $(n-1)$-connected CW complex. Type $\F_\infty$ means type $\F_n$ for all $n$. A group is of type $\F_*$ if it admits a geometric action on a contractible CW complex. For torsion-free groups, type $\F_*$ is equivalent to type $\F$, meaning admitting a compact classifying space.

\begin{proposition}\label{prop:geom_action}
Let $\mathfrak{X}$ be a proper metric space satisfying the Arbitrarily Pinched Strong Link Criterion. Let $G$ be a group acting geometrically on $\mathfrak{X}$. Then $G$ is of type $\F_*$.
\end{proposition}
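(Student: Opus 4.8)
The plan is to combine the given metric-space results with a geometric action argument to build a contractible simplicial complex on which $G$ acts geometrically. The key observation is that a geometric action by isometries on $\mathfrak{X}$ induces, for each fixed $t$, a simplicial action on $\rips_t(\mathfrak{X})$, since isometries preserve distances and hence send simplices to simplices. So the first thing I would do is verify that for large enough $t$, the complex $\rips_t(\mathfrak{X})$ is contractible, and that $G$ acts on it both geometrically and cocompactly.

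For contractibility, I would invoke the earlier machinery. Since $\mathfrak{X}$ satisfies the Arbitrarily Pinched Strong Link Criterion, for any $r$ there is an $N$ with the $r$-Pinched Strong Link Criterion holding in $[N,\infty)$. The properly discontinuous cocompact action lets me select a compact fundamental domain, e.g. a closed ball whose $G$-translates cover $\mathfrak{X}$; properness of $\mathfrak{X}$ (I would note that geometric actions on such spaces force properness, or assume it as in the surrounding examples) makes this ball compact. Choosing a finite $r$-dense subset of the ball and taking its $G$-orbit yields an $r$-dense subspace $X$ of $\mathfrak{X}$ satisfying the Morse Criterion (finiteness of balls follows from proper discontinuity plus local finiteness). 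Then Corollary~\ref{cor:approx} applies with this $X$, giving that $\rips_t(\mathfrak{X})$ is contractible for all $t$ beyond the relevant threshold, in particular for all $t>N$ once $r$ is chosen so that the pinched criterion kicks in.

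The main obstacle, and the step I would spend the most care on, is the cocompactness of the $G$-action on $\rips_t(\mathfrak{X})$ as a simplicial complex, together with finiteness of cell stabilizers. Cocompactness of the action on $\mathfrak{X}$ does not immediately give cocompactness on $\rips_t(\mathfrak{X})$, because a priori the vertex set $\mathfrak{X}$ is an uncountable space and simplices correspond to finite subsets of bounded diameter. I expect the resolution is that one should replace $\mathfrak{X}$ by the $G$-cocompact $r$-dense subspace $X$ constructed above and work with $\rips_t(X)$ instead: by Proposition~\ref{prop:lattice_rips}, $\rips_t(X)$ is itself contractible for $t>N$, the vertex set $X$ is a single finite union of $G$-orbits hence $G$-cocompact, and simplices of bounded diameter fall into finitely many $G$-orbits by proper discontinuity, so the action on $\rips_t(X)$ is cocompact. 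Cell stabilizers are finite because a simplex is a finite diameter-bounded subset of $X$ and only finitely many group elements can fix such a set, again by proper discontinuity.

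Having assembled these pieces, I would conclude as follows: for suitable $t$ the complex $\rips_t(X)$ is a contractible simplicial complex carrying a geometric (proper and cocompact) simplicial $G$-action, which by definition means $G$ is of type $\F_*$. The cleanest writeup routes everything through the subspace $X$ rather than $\mathfrak{X}$ directly, using Proposition~\ref{prop:lattice_rips} for contractibility and proper discontinuity for the two finiteness conditions, so I would structure the proof to first fix $r$ and $N$, then build $X$, then check the three action properties, and finally cite the definition of type $\F_*$.
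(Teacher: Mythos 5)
Your strategy is essentially the paper's: pass to a discrete, $G$-invariant, $r$-dense subspace $X\subseteq\mathfrak{X}$, use Corollary~\ref{cor:lattice} together with Theorem~\ref{thrm:contractible} (packaged as Proposition~\ref{prop:lattice_rips}) to get a contractible $\rips_t(X)$, and then check that the induced $G$-action on $\rips_t(X)$ is geometric. The main difference is the construction of $X$: the paper simply takes a single orbit $X=G.x_0$. This is cleaner than your orbit-of-a-finite-set construction in two ways. First, cocompactness alone makes a single orbit $r$-dense for some $r$ (the covering compact set is bounded), so one never needs a compact \emph{ball} covering $\mathfrak{X}$, and hence never needs properness of $\mathfrak{X}$ --- which is not a hypothesis of the proposition, and your hedge ``geometric actions force properness'' is not justified for general metric spaces (the citation in the paper's Theorem~\ref{thrm:asymp_cat0_group} concerns geodesic spaces). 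Your route can be repaired without properness by covering $\mathfrak{X}/G$ with images of finitely many $\varepsilon$-balls and taking the orbit of their centers, but the single-orbit choice avoids the issue entirely. Second, with a single orbit, cocompactness of the action on $\rips_t(X)$ is immediate from vertex-transitivity plus local finiteness, rather than from counting orbits of simplices. Both routes respect the essential quantifier order: the density parameter $r$ is fixed first, and only then does the Arbitrarily Pinched hypothesis supply the $N$ past which the $r$-Pinched Strong Link Criterion holds.

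One step in your writeup is genuinely incomplete: you justify the Morse Criterion for $X$ by saying ``finiteness of balls follows from proper discontinuity plus local finiteness.'' Finite balls are only half of the Morse Criterion; it also forbids infinite alternating chains $\sigma_1<\sigma_2>\sigma_3<\cdots$ with the stated diameter behavior, and the paper explicitly remarks that the two conditions do not imply one another. The fix is the one the paper uses: by proper discontinuity, for every $D$ there are only finitely many $G$-orbits of pairs of points of $X$ at distance at most $D$, so the set of diameters of subsets of $X$ is discrete in $\R$ and $X$ is proper; then Observation~\ref{obs:discrete} yields the Morse Criterion. With that patch, and with the properness issue either patched or sidestepped via the single-orbit construction, your proof is correct.
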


\begin{proof}
Let $x_0\in \mathfrak{X}$ and let $X\defeq G.x_0$ be the orbit, viewed as a metric space with the induced metric. Since the action is cocompact, $X$ is $r$-dense in $\mathfrak{X}$ for some $r$. Since the action is properly discontinuous only finitely many elements of $X$ lie in any metric ball in $\mathfrak{X}$, so $X$ is proper. Since $G$ is transitive on $X$, finiteness of balls implies that the set of diameters of finite subsets of $X$ is closed and discrete in $\R$. Hence Observation~\ref{obs:discrete} says that $X$ satisfies the Morse Criterion. By Lemma~\ref{lem:lattice} and Theorem~\ref{thrm:contractible}, some proper Vietoris--Rips complex $\rips_t(X)$ is contractible. The action of $G$ on $\mathfrak{X}$ induces an action on $\rips_t(X)$. Since only finitely many elements of $X$ lie in any metric ball, $\rips_t(X)$ is locally finite, so the fact that $G$ acts vertex transitively on it implies the action is cocompact. Also, proper discontinuity ensures that all stabilizers in $G$ of simplices in $\rips_t(X)$ are finite. We conclude that $G$ acts geometrically on the contractible simplicial complex $\rips_t(X)$, so $G$ is of type $\F_*$.
\end{proof}

One application of Proposition~\ref{prop:geom_action} is to asymptotically $\CAT(0)$ groups. A group is \emph{asymptotically CAT(0)} if it admits a geometric action on a (proper) asymptotically $\CAT(0)$ space.

\begin{theorem}\label{thrm:asymp_cat0_group}
Let $G$ be an asymptotically $\CAT(0)$ group. Then $G$ admits a geometric action on a contractible simplicial complex, and hence is of type $\F_*$.
\end{theorem}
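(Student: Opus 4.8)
The plan is to obtain Theorem~\ref{thrm:asymp_cat0_group} as essentially an immediate consequence of the two preparatory results already in hand: Lemma~\ref{lem:asymp_cat0}, which extracts the Arbitrarily Pinched Strong Link Criterion from asymptotic $\CAT(0)$-ness, and Proposition~\ref{prop:geom_action}, which converts a geometric action on a space satisfying that criterion into type $\F_*$. So the whole theorem amounts to chaining these two facts together, with a little care about matching hypotheses.

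First I would unwind the definition of an asymptotically $\CAT(0)$ group: by hypothesis $G$ admits a geometric action (properly discontinuous, cocompact, by isometries) on some asymptotically $\CAT(0)$ space $\mathfrak{X}$. Since the working definition of asymptotically $\CAT(0)$ in Definition~\ref{def:asymp_cat0} is framed for geodesic spaces, I would take $\mathfrak{X}$ to be geodesic, which is the natural setting for such group actions. Then Lemma~\ref{lem:asymp_cat0} applies and yields that $\mathfrak{X}$ satisfies the Arbitrarily Pinched Strong Link Criterion.

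Next I would invoke Proposition~\ref{prop:geom_action} directly with this $\mathfrak{X}$ and $G$: since $\mathfrak{X}$ satisfies the Arbitrarily Pinched Strong Link Criterion and $G$ acts geometrically on it, $G$ is of type $\F_*$. To secure the sharper phrasing about a contractible \emph{simplicial} complex, I would point to the construction inside the proof of Proposition~\ref{prop:geom_action}: the contractible space produced there is a proper Vietoris--Rips complex $\rips_t(X)$, where $X=G.x_0$ is a single orbit with the induced metric. Such a $\rips_t(X)$ is by definition a (locally finite) simplicial complex, on which $G$ acts geometrically, and Corollary~\ref{cor:lattice} together with Theorem~\ref{thrm:contractible} guarantee it is contractible for suitable $t$. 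This gives exactly the asserted geometric action on a contractible simplicial complex.

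The closest thing to an obstacle is bookkeeping rather than mathematics: one must confirm that the orbit $X=G.x_0$ is $r$-dense in $\mathfrak{X}$ (from cocompactness) and is proper with a discrete set of subset-diameters (from proper discontinuity), so that Observation~\ref{obs:discrete} delivers the Morse Criterion; and that $G$ acts vertex-transitively on $\rips_t(X)$ with finite simplex stabilizers, so the induced action is again geometric. All of these verifications are already carried out within the proof of Proposition~\ref{prop:geom_action}, so once the geodesic hypothesis is in place the theorem follows with no essentially new work.
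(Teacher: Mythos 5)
Your proposal is correct and is essentially identical to the paper's proof: the paper likewise obtains the theorem by chaining Lemma~\ref{lem:asymp_cat0} with Proposition~\ref{prop:geom_action}, and appeals to the \emph{proof} of that proposition to exhibit the contractible simplicial complex as a proper Vietoris--Rips complex $\rips_t(X)$ of an orbit. The only cosmetic difference is in the hypothesis-matching remark: where you note that $\mathfrak{X}$ may be taken geodesic so Lemma~\ref{lem:asymp_cat0} applies, the paper instead records that $\mathfrak{X}$ is automatically proper (citing \cite[Exercise~I.8.4(1)]{bridson99}), which is what legitimizes the step in the proof of Proposition~\ref{prop:geom_action} that proper discontinuity forces only finitely many orbit points in each metric ball.
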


\begin{proof}
First let $\mathfrak{X}$ be an asymptotically $\CAT(0)$ space on which $G$ acts geometrically (so $\mathfrak{X}$ is proper, see \cite[Exercise~I.8.4(1)]{bridson99}). By Lemma~\ref{lem:asymp_cat0}, $\mathfrak{X}$ satisfies the Arbitrarily Pinched Strong Link Criterion. Hence Proposition~\ref{prop:geom_action} applies, and by the proof of the proposition there exist $X$ and $t$ such that $G$ acts geometrically on the contractible simplicial complex $\rips_t(X)$.
\end{proof}

This recovers various known results, e.g., that asymptotically $\CAT(0)$ groups are of type $\F_\infty$ \cite{kar11}, and that $\CAT(0)$ groups and hyperbolic groups are of type $\F_*$ \cite{ontaneda05,bridson99}. The fact that all asymptotically $\CAT(0)$ groups are of type $\F_*$ seems to be new (the $\F_\infty$ proof in \cite{kar11} does not lead to $\F_*$, as far as we can tell).

\subsection{Word metrics on groups}\label{sec:word}

A standard example of a finitely generated group acting geometrically on a metric space is given by taking a Cayley graph of the group with respect to a finite generating set. The \emph{word metric} on such a group is given by endowing the Cayley graph with the path metric (with each edge having length $1$) and then taking the induced metric on the vertex set. A group acting geometrically on a space satisfying the Strong Link Criterion in some range (even the Arbitrarily Pinched Strong Link Criterion) might not itself satisfy the Strong Link Criterion using the word metric. For example one can check that $\Z^2$ with the standard word metric does not satisfy the Strong Link Criterion in any relevant range. However, if a group with a word metric does satisfy the Link Criterion in some range $[N,\infty)$ then there is a lot we can say, as we discuss now.

First we remark that an adequate understanding of the Vietoris--Rips complexes of $G$ with the word metric can reveal topological finiteness properties of $G$. By Brown's Criterion \cite{brown87}, if a group acts properly on an $(n-1)$-connected CW complex $X$ with an invariant cocompact filtration $(X_t)_{t\in\R}$, then the group is of type $\F_n$ if and only if this filtration is \emph{essentially $(n-1)$-connected}, meaning for all $t$ there exists $s\ge t$ such that the inclusion $X_t\to X_s$ induces the trivial map in $\pi_k$ for $k\le n-1$. The following result gives a nice geometric definition of type $\F_n$.

\begin{lemma}\label{lem:rips_fin_props}
Let $G$ be a group with the word metric corresponding to some finite generating set. Then $G$ is of type $\F_n$ if and only if the filtration $\{\rips_t(G)\}_{t\in\R}$ is essentially $(n-1)$-connected. If some $\rips_t(G)$ is contractible then $G$ is of type $\F_*$.
\end{lemma}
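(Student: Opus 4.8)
The plan is to fit the situation into Brown's Criterion, as stated just above the lemma, applied to the action of $G$ on $\rips(G)$ equipped with the filtration $\{\rips_t(G)\}_{t\in\R}$, and then read off both assertions.

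First I would set up the action and check properness. The group $G$ acts on itself by left multiplication, which is an isometry of the word metric and hence induces a simplicial action on $\rips(G)$ that preserves each subcomplex $\rips_t(G)$. On the vertex set $G$ this action is free and transitive. Every simplex of $\rips(G)$ has a finite vertex set $F\subseteq G$, and its stabilizer $\{g\in G \mid gF=F\}$ is finite, since any such $g$ is determined by the image of a fixed vertex of $F$ (which must land in the finite set $F$). Thus the action is proper in the sense of having finite cell stabilizers.

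Next I would verify the remaining hypotheses of Brown's Criterion. The complex $\rips(G)=\rips_\infty(G)$ is the full simplex on $G$, hence contractible and in particular $(n-1)$-connected for every $n$, and it is exhausted by the $G$-invariant subcomplexes $\rips_t(G)$ as $t\to\infty$. Because $G$ is finitely generated, closed balls in the word metric are finite, so for finite $t$ the complex $\rips_t(G)$ is locally finite; together with vertex-transitivity this makes each quotient $\rips_t(G)/G$ a finite complex, so the filtration is cocompact. Brown's Criterion then yields precisely the first statement: $G$ is of type $\F_n$ if and only if the filtration $\{\rips_t(G)\}_{t\in\R}$ is essentially $(n-1)$-connected. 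For the second statement, suppose some $\rips_t(G)$ is contractible. The observations above show that $G$ acts on it both properly (finite simplex stabilizers) and cocompactly (locally finite and vertex-transitive), that is, geometrically; since a geometric action on a contractible CW complex is exactly the defining condition, $G$ is of type $\F_*$.

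I do not expect a genuine obstacle here, as the content is the bookkeeping needed to invoke Brown's Criterion. The one point meriting care is cocompactness of the filtration pieces: this is exactly where finite generation of $G$ is used, since it guarantees finite metric balls and hence local finiteness of $\rips_t(G)$ for finite $t$. Without finite generation the quotients $\rips_t(G)/G$ need not be compact and the criterion would not apply.
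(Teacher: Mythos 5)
Your proof is correct and follows essentially the same route as the paper: both verify that $G$ acts properly on the contractible complex $\rips(G)$ with the $G$-invariant, cocompact sublevel sets $\rips_t(G)$ as filtration, then invoke Brown's Criterion for the first claim and the geometricity of the action on a contractible $\rips_t(G)$ for the second. Your write-up merely spells out the details (finite simplex stabilizers via freeness on vertices, local finiteness from finite balls) that the paper's terser proof leaves implicit.
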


\begin{proof}
The complex $\rips(G)$ is contractible and the action of $G$ on $\rips(G)$ is proper. The sublevel sets $\rips_t(G)$ are $G$-invariant, and are cocompact since they are locally compact and $G$ is vertex-transitive. The first result now follows from Brown's Criterion. The second result follows since the action of $G$ on $\rips_t(G)$ is geometric.
\end{proof}

Since all diameters of finite subsets of $G$ are in $\N_0$ and balls in $G$ are finite the Morse Criterion holds, so $(\diam,-\dim)$ is always a descending-type Morse function on any $\rips(G)$ as above. We now focus on the situation dictated by the Link Criterion. Since it will come up a lot from now on, let us define:

\begin{definition}[Asymptotic (Strong) Link Criterion]
If a metric space satisfies the (Strong) Link Criterion in the range $[N,\infty)$, say that it satisfies the \emph{Asymptotic (Strong) Link Criterion past $N$}. If it satisfies the Asymptotic (Strong) Link Criterion past $N$ for some $N$ we will just say it satisfies the Asymptotic (Strong) Link Criterion.
\end{definition}

\begin{theorem}\label{thrm:F_infty}
Let $G$ be a group with a word metric corresponding to some finite generating set. If $G$ satisfies the Asymptotic Link Criterion past $N$ then $\rips_t(G)$ is contractible for all $t\ge N$, and hence $G$ is of type $\F_*$.
\end{theorem}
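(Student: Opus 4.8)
The plan is to reduce everything to the machinery already assembled, namely Theorem~\ref{thrm:contractible} and Lemma~\ref{lem:rips_fin_props}. By definition, the Asymptotic Link Criterion past $N$ is exactly the Link Criterion in the range $I=[N,\infty)$, so to invoke Theorem~\ref{thrm:contractible} the only missing ingredient is the Morse Criterion for $G$. First I would verify this: since the generating set is finite, every ball in $G$ contains only finitely many elements, so $G$ is proper; and since all distances, hence all diameters of subsets of $G$, lie in $\N_0$, the set of such diameters is discrete in $\R$. Observation~\ref{obs:discrete} then yields the Morse Criterion directly. (This is precisely the remark preceding the statement, so I would simply cite it.)

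With the Morse Criterion in hand and the Link Criterion holding throughout $I=[N,\infty)$, Theorem~\ref{thrm:contractible} applies. The key bookkeeping point is that for any real $t$ we have $(t,\infty)\subseteq[N,\infty)$ if and only if $t\ge N$; in particular this includes the endpoint $t=N$, since $(N,\infty)\subseteq[N,\infty)$. Thus the second conclusion of Theorem~\ref{thrm:contractible} gives that $\rips_t(G)$ is contractible for every $t\ge N$, establishing the first assertion in one stroke.

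Finally, to pass from contractibility to the finiteness property, I would invoke the second statement of Lemma~\ref{lem:rips_fin_props}: since some (indeed every) $\rips_t(G)$ with $t\ge N$ is contractible and $G$ acts geometrically on it, $G$ is of type $\F_*$. There is no genuine obstacle here, as both the topological input (contractibility) and the group-theoretic output (type $\F_*$) are furnished by earlier results; the only point requiring care is the range bookkeeping above, ensuring that the endpoint $t=N$ is genuinely covered so that the stated conclusion for all $t\ge N$ is correct.
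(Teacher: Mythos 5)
Your proposal is correct and follows exactly the paper's route: Morse Criterion via the integrality of the word metric and finiteness of balls (the remark preceding the theorem), then Theorem~\ref{thrm:contractible} with $I=[N,\infty)$, then the second statement of Lemma~\ref{lem:rips_fin_props}. The range bookkeeping you spell out (that $(t,\infty)\subseteq[N,\infty)$ precisely when $t\ge N$) is implicit in the paper's terser argument but is the right thing to check.
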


\begin{proof}
Our assumptions ensure that the hypotheses of Theorem~\ref{thrm:contractible} are met, so $\rips_t(G)$ is contractible for all $t\ge N$. Hence by Lemma~\ref{lem:rips_fin_props}, $G$ is of type $\F_*$.
\end{proof}

In particular if $G$ is torsion-free and satisfies the Asymptotic Link Criterion then $G$ is of type $\F$. As a remark, we do not know whether every group of type $\F_*$ admits a contractible proper Vietoris--Rips complex. In fact we are not even sure whether $\Z^n$ with the standard word metric admits a contractible proper Vietoris--Rips complexes for all $n$. (For $n=2$ we do have an \emph{ad hoc} proof that $\rips_2(\Z^2)$ is contractible.)

\begin{remark}
If $S$ is a finite generating set of $G$ such that the word metric corresponding to $S$ produces a contractible proper Vietoris--Rips complex $\rips_t(G)$, then after replacing $S$ by the set $S'$ of all products of at most $t$ elements of $S$ we get a word metric corresponding to $S'$ with respect to which $\rips_1(G)$ is contractible. Note that $\rips_1(G)$ is the ``flagification'' of the Cayley graph of $G$ with respect to $S'$. Hence Theorem~\ref{thrm:F_infty} says that if a group admits a word metric satisfying the Asymptotic Link Criterion then $G$ admits a Cayley graph with contractible flagification.
\end{remark}

Hyperbolic groups famously have contractible $\rips_t(G)$ for large enough $t$. This is originally due to Rips, see~\cite[Proposition~III.$\Gamma$.3.23]{bridson99}. It was generalized by Alonso \cite{alonso92} who showed this for any group admitting a so called contracting combing. Following \cite{alonso92}, a \emph{path} in $G$ is an eventually constant map $p\colon \N_0\to G$ such that $d(p(n),p(n+1))\le 1$ for all $n\in \N_0$. Let $P(G)$ be the set of all paths $p$ with $p(0)=1$, and define $\pi\colon P(G)\to G$ by sending $p$ to $p(n)$ for $n$ large enough that $p(n)=p(n+1)=\cdots$. Now a \emph{combing} of $G$ is a section $s\colon G\to P(G)$ of $\pi$. A combing $s$ is \emph{geodesic} if every $s(g)$ is a geodesic. A combing $s$ is \emph{bounded} if there exists a monotone function $\phi\colon \N_0\to\N$ with $\phi(n)\ge n$ for all $n$ such that for all $g,h\in G$ and all $n\in\N_0$ we have $d(s(g)(n),s(h)(n))\le \phi(d(g,h))$. A combing $s$ is \emph{contracting} if there exists $C\ge 2$ such that for all $g,h\in G$ and $n,n'\in\N_0$, with $n'\le n$ and $\lfloor C/2\rfloor \le n$, if $d(s(g)(n),s(h)(n'))\le C$ then also $d(s(g)(n-\lfloor C/2\rfloor),s(h)(n'))\le C$. Alonso proves that every contracting combing is bounded \cite[Lemma~2]{alonso92}.

\begin{cit}\cite[Theorem~1]{alonso92}
If $G$ admits a bounded combing then for all $t$ there exists $s\ge t$ such that the inclusion $\rips_t(G)\to\rips_s(G)$ induces the trivial map in $\pi_k$ for all $k$. If $G$ admits a contracting combing then $\rips_t(G)$ is contractible for some $t$.
\end{cit}

In particular groups with a bounded combing are of type $\F_\infty$, and groups with a contracting combing are of type $\F_*$. Since hyperbolic groups admit contracting combings, this recovers Rips' result. Also, groups admitting bounded combings include the important family of automatic groups (in this case the function $\phi$ can be taken to be $\phi(n)=Cn+D$ for constants $C$ and $D$), so this implies automatic groups are of type $\F_\infty$.

The following describes the most general type of combing that ensures the Asymptotic Strong Link Criterion holds. (For lack of a better name we will just call it a ``good'' combing.)

\begin{proposition}[Good combing]\label{prop:good_combing}
Let $G$ be a finitely generated group. Suppose there exists a geodesic combing $s$ and a number $N\ge 0$ such that whenever $g\in G$ with $d(1,g)=t\ge N$, there exists $n$ such that for all $h\in G$ with $d(1,h),d(h,g)\le t$ we have $d(h,s(g)(n))<t$. Then $G$ satisfies the Strong Link Criterion in the range $[N,\infty)$, so $\rips_t(G)$ is contractible for all $t\ge N$ and $G$ is of type $\F_*$.
\end{proposition}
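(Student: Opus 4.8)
The plan is to verify the hypotheses of Theorem~\ref{thrm:F_infty}, so that its conclusion (contractibility of $\rips_t(G)$ for all $t \ge N$, hence type $\F_*$) follows immediately. Thus the entire task reduces to showing that the ``good combing'' hypothesis implies the Strong Link Criterion in the range $[N,\infty)$. By definition of the Strong Link Criterion at scale $t$, I must show: for every $x,y \in G$ with $d(x,y) = t \ge N$, there exists $z \in G$ such that $\overline{B}_t(x) \cap \overline{B}_t(y) \subseteq B_t(z)$.

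The first step is to use the homogeneity of the group to reduce to the case $x = 1$. Since $G$ acts on itself by left translation as isometries of the word metric, given arbitrary $x,y$ with $d(x,y) = t$, I would translate by $x^{-1}$ to replace the pair $(x,y)$ by $(1, x^{-1}y)$. Setting $g \defeq x^{-1}y$, we have $d(1,g) = t$, and it suffices to find $z$ for this translated configuration; translating back by $x$ produces the required witness for the original pair. So from now on I assume $x = 1$ and $d(1,g) = t \ge N$.

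Now I invoke the good combing hypothesis directly. Since $d(1,g) = t \ge N$, the hypothesis furnishes an index $n$ such that for all $h \in G$ with $d(1,h) \le t$ and $d(h,g) \le t$ we have $d(h, s(g)(n)) < t$. I set $z \defeq s(g)(n)$. The point is that any $w \in \overline{B}_t(1) \cap \overline{B}_t(g)$ is by definition an element $h \in G$ satisfying $d(1,h) \le t$ and $d(g,h) \le t$, so the hypothesis gives $d(w,z) < t$, i.e., $w \in B_t(z)$. This shows $\overline{B}_t(1) \cap \overline{B}_t(g) \subseteq B_t(z)$, which is exactly the Strong Link Criterion at scale $t$.

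Since $t \ge N$ was arbitrary, $G$ satisfies the Strong Link Criterion in the range $[N,\infty)$, i.e., the Asymptotic Strong Link Criterion past $N$. By Observation~\ref{obs:link_condition} this gives the Asymptotic Link Criterion past $N$, so Theorem~\ref{thrm:F_infty} applies and yields contractibility of $\rips_t(G)$ for all $t \ge N$ and type $\F_*$. I expect there to be essentially no obstacle here: the hypothesis of ``good combing'' has been engineered to be almost verbatim the Strong Link Criterion once the basepoint is normalized, and the only genuine content is the translation reduction, which is routine since left multiplication is an isometry. The hypothesis that $s$ is a geodesic combing with $s(g)(n) \in \overline{B}_t(g)$-type control is what guarantees $z = s(g)(n)$ is a legitimate element of $G$ playing the role of the witness; no further estimate is needed beyond unwinding the definitions.
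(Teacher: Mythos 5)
Your proposal is correct and follows essentially the same argument as the paper: reduce to $x=1$ by left translation, take $z = s(g)(n)$ from the good combing hypothesis to verify the Strong Link Criterion at scale $t$, and then invoke Theorem~\ref{thrm:F_infty}. Your explicit citation of Observation~\ref{obs:link_condition} to pass from the Strong Link Criterion to the Link Criterion is a minor point the paper leaves implicit, but the route is identical.
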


\begin{proof}
Let $x,y\in G$ with $d(x,y)=t\ge N$. Since the action of $G$ on itself preserves the word metric, without loss of generality $x=1$, and let us write $y=g$. Choose $n$ as in the hypothesis, and set $z=s(g)(n)$. Now for all $h\in G$ with $d(1,h),d(h,g)\le t$ we have $d(h,z)<t$, so $\overline{B}_t(1)\cap \overline{B}_t(g)\subseteq B_t(z)$ as desired. This shows that $G$ satisfies the Strong Link Criterion in the range $[N,\infty)$, and the other results follow from Theorem~\ref{thrm:F_infty}.
\end{proof}

It is easy to see that hyperbolic groups admit good combings, so this recovers Rips' result (and can be viewed as giving a discrete Morse theoretic proof of Rips' result). Indeed, suppose the Cayley graph is $\delta$-hyperbolic and consider a geodesic triangle with diameter $t$, say with vertices $1,g,h$ with $d(1,g)=t$. Let $z$ be the midpoint of the geodesic side from $1$ to $g$. Choose $v$ on one of the other sides, without loss of generality the side from $1$ to $h$, such that $d(v,z)\le \delta$, and note that $d(1,v)\ge d(1,z)-\delta=t/2 - \delta$ so $d(h,v)\le t/2 + \delta$. Now we have $d(h,z)\le d(h,v)+d(v,z)\le t/2+2\delta$. Thus, taking $n$ such that $s(g)(n)$ is within $1/2$ of $z$ (note any group is $1/2$-dense in its Cayley graph) we have $d(h,s(g)(n))\le t/2+2\delta+1/2$, which we can ensure is less than $t$ by taking $N> 4\delta+1$.

\bibliographystyle{alpha}

\end{document}